\newtheorem{theorem}[subsection]{Theorem}
\newtheorem{lemma}[subsection]{Lemma}
\newtheorem{prop}[subsection]{Proposition}
\newtheorem{corollary}[subsection]{Corollary}
\theoremstyle{definition}
\newtheorem{remark}[subsection]{Remark}
\newtheorem{example}[subsection]{Example}
\newtheorem*{ack}{Acknowledgment}
\numberwithin{equation}{section}
\newcommand{\VV}{{\mathcal V}}
\newcommand{\RR}{{\mathcal R}}
\newcommand{\EE}{{\mathcal E}}
\newcommand{\PP}{{\mathcal P}}
\newcommand{\wL}{\widehat{\mathbb{L}}}
\newcommand{\Z}{\mathbb{Z}}
\newcommand{\Q}{\mathbb{Q}}
\newcommand{\C}{\mathbb{C}}
\newcommand{\F}{\mathbb{F}}
\newcommand{\TT}{\mathbb{T}}
\newcommand{\AB}{\mathbb{A}}
\newcommand{\LL}{\mathbb{L}}
\renewcommand{\k}{\Bbbk}
\newcommand{\m}{{\mathfrak m}}
\newcommand{\G}{\Gamma}
\DeclareMathOperator{\Hom}{Hom}
\DeclareMathOperator{\Aut}{Aut}
\DeclareMathOperator{\rk}{rk}
\DeclareMathOperator{\gr}{gr}
\DeclareMathOperator{\coker}{coker}
\DeclareMathOperator{\id}{id}
\DeclareMathOperator{\ch}{char}
\DeclareMathOperator{\df}{def}
\DeclareMathOperator{\abf}{abf}
\DeclareMathOperator{\ab}{ab}
\DeclareMathOperator{\ann}{ann}
\newcommand{\surj}{\twoheadrightarrow}
\begin{document}

\title[Characteristic varieties of nilpotent groups]{%
Characteristic varieties of nilpotent groups and applications}

\author[A.~D.~Macinic]{Anca Daniela M\u acinic$^*$}
\address{Inst. of Math. Simion Stoilow,
P.O. Box 1-764,
RO-014700 Bucharest, Romania}
\email{Anca.Macinic@imar.ro}

\author[S.~Papadima]{\c Stefan~Papadima$^*$}
\address{Inst. of Math. Simion Stoilow,
P.O. Box 1-764,
RO-014700 Bucharest, Romania}
\email{Stefan.Papadima@imar.ro}

\thanks{$^*$Partially supported by the CEEX Programme of
the Romanian Ministry of Education and Research, contract
2-CEx 06-11-20/2006.}

\subjclass[2000]{Primary
20F18, 
55N25; 
Secondary
20J05,
57M27.  
}

\keywords{nilpotent group, metabelian group, characteristic variety, 
resonance variety, Alexander polynomial, deficiency, link group.}

\begin{abstract}
We compute the characteristic varieties and the Alexander polynomial of
a finitely generated nilpotent group. We show that the first characteristic variety
may be used to detect nilpotence.
We use the Alexander polynomial to deduce that the only torsion-free,
finitely generated nilpotent groups with positive deficiency are $\Z$ and
$\Z^2$, extending a classical result on nilpotent link groups.
\end{abstract}

\maketitle


\section{Introduction}
\label{sec:intro}

Let $M$ be a connected CW--complex with finite $1$--skeleton. Let $\TT_M$ be
the algebraic group $\Hom (\pi_1(M), \C^*)$, the {\em character torus} of $M$.
We denote by $\C_{\rho}$ the rank one complex local system on $M$ corresponding
to a character $\rho\in \TT_M$. The {\em characteristic varieties} $\VV_k^i (M)$
are defined by
\begin{equation}
\label{eq:defchar}
\VV_k^i (M)= \{ \rho\in \TT_M \; \mid\; \dim H^i(M, \C_{\rho})\ge k \}\, ,
\end{equation}
for $i,k>0$. They emerged from Novikov's work \cite{N} on 
Morse theory for circle-valued functions.
Their importance was recognized in various other areas, and their study was vigorously
pursued. See for instance \cite{DPS1, DPS2, DPS3, DPS4}, where Serre's problem
on fundamental groups of smooth complex algebraic varieties is attacked through the prism
of the cohomology jumping loci from \eqref{eq:defchar}. 

Given a finitely generated group $G$, one may replace $M$ by the classifying space $K(G,1)$
in the above definitions (simply changing $M$ to $G$ in the notation). Here is
our first result (also proved by Alaniya 
\cite{A}, by using Lie algebra techniques, but only for torsion-free groups $G$
and characters $\rho$ belonging to the identity component of $\TT_G$). 
See Theorem \ref{thm:polyvanish} and Example \ref{ex:polyz}
for a more general statement.

\begin{theorem}
\label{thm:vintro}
Let $G$ be a finitely generated nilpotent group. Then
\begin{equation*}
\VV_k^i (G)=
\begin{cases}
\{ 1 \}\, , & {\rm if} \quad b_i(G)\ge k\, ;\\
\emptyset \, , & {\rm otherwise}\, .
\end{cases}
\end{equation*}
\end{theorem}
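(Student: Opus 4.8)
The plan is to separate the two regimes of the formula. For the trivial character $\rho=1$ the local system $\C_1$ is the constant sheaf, so $\dim H^i(G,\C_1)=\dim_\C H^i(G,\C)=b_i(G)$; hence $1\in\VV_k^i(G)$ precisely when $b_i(G)\ge k$. All the content therefore lies in the vanishing statement: for every \emph{nontrivial} character $\rho\in\TT_G$ one has $H^*(G,\C_\rho)=0$. Granting this, no $\rho\ne 1$ ever lies in $\VV_k^i(G)$, so $\VV_k^i(G)$ equals $\{1\}$ when $b_i(G)\ge k$ and is empty otherwise, which is exactly the claim.

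To prove the vanishing I would argue by induction on the nilpotency class of $G$, using the Lyndon--Hochschild--Serre spectral sequence of the central extension $1\to Z\to G\to Q\to 1$, where $Z=Z(G)$ is the center and $Q=G/Z$. Both $Z$ and $Q$ are again finitely generated nilpotent (finitely generated nilpotent groups are polycyclic, hence Noetherian, so subgroups and quotients stay finitely generated), and $Q$ has strictly smaller nilpotency class. The spectral sequence reads $E_2^{p,q}=H^p(Q,H^q(Z,\C_\rho))\Rightarrow H^{p+q}(G,\C_\rho)$, so it suffices to kill the $E_2$ page. The base case is $G$ abelian: here the essential computation is that for a finitely generated abelian group $A$ and a nontrivial character $\chi\colon A\to\C^*$ one has $H^*(A,\C_\chi)=0$. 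This follows from the two-term complex $\C\xrightarrow{t-1}\C$ computing $H^*(\Z,\C_\chi)$ (which vanishes once $t=\chi(1)\ne 1$), the vanishing of the cohomology of a finite group with coefficients in a module having no trivial summand, and the Künneth formula over the field $\C$.

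For the inductive step I would split according to the restriction $\rho|_Z$. If $\rho|_Z\ne 1$, then the base-case lemma applied to the abelian group $Z$ gives $H^q(Z,\C_\rho)=0$ for all $q$, so the entire $E_2$ page vanishes and $H^*(G,\C_\rho)=0$. If $\rho|_Z=1$, then $\rho$ factors through a character $\bar\rho$ of $Q$ that is still nontrivial (as $\rho\ne 1$). Because $Z$ is central, the conjugation action of $Q$ on $Z$, and hence on $H^*(Z,\C)$, is trivial, and one identifies the coefficient module as $H^q(Z,\C_\rho)\cong H^q(Z,\C)\otimes\C_{\bar\rho}$ as $Q$-modules. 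Thus $E_2^{p,q}\cong H^q(Z,\C)\otimes H^p(Q,\C_{\bar\rho})$, which vanishes for all $p$ by the inductive hypothesis applied to the nontrivial character $\bar\rho$ of $Q$. In either case the $E_2$ page is zero and the vanishing follows.

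The step I expect to be the main obstacle is the bookkeeping in the case $\rho|_Z=1$: one must verify carefully that centrality of $Z$ trivializes the $Q$-action on $H^*(Z,\C)$ and that the residual twist assembles into the clean identification $H^q(Z,\C_\rho)\cong H^q(Z,\C)\otimes\C_{\bar\rho}$, so that the resulting splitting of $E_2^{p,q}$ is $Q$-equivariant. The remaining ingredients---finite generation and nilpotence of $Z$ and $Q$, convergence of the spectral sequence, and the abelian base computation---are standard.
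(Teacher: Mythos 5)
Your proof is correct, and it uses the same basic toolkit as the paper: reduce the theorem to the vanishing of $H^*(G,\C_\rho)$ for $\rho\ne 1$ (the trivial character contributing exactly the Betti numbers), then induct on nilpotency class via the Lyndon--Hochschild--Serre spectral sequence of a central extension, with finitely generated abelian groups (cyclic computation plus K\"unneth) as the base case. The genuine difference is the choice of central extension. The paper (Theorem \ref{thm:vanish}) uses the lower central series extension $1\to \G_i G/\G_{i+1}G\to G/\G_{i+1}G\to G/\G_i G\to 1$ with $i\ge 2$; since the kernel then lies in the commutator subgroup, every character of $G/\G_{i+1}G$ automatically factors through the quotient, so no case distinction is ever needed. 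You instead quotient by the center $Z=Z(G)$, which forces the dichotomy $\rho|_Z\ne 1$ versus $\rho|_Z=1$: in the first case your abelian base lemma applied to $Z$ kills the coefficient modules $H^q(Z,\C_\rho)$ outright, and in the second you proceed exactly as the paper does. Both inductions are well-founded ($G/Z(G)$ drops the class by one, as do the lower central quotients), and your appeal to polycyclicity to keep $Z$ and $G/Z$ finitely generated is the right justification. The step you flagged as the delicate one --- that centrality gives the $Q$-equivariant identification $H^q(Z,\C_\rho)\cong H^q(Z,\C)\otimes\C_{\bar\rho}$ when $\rho|_Z=1$ --- is precisely the point the paper also spells out (in homological form, after passing to homology by duality); it is standard and causes no trouble. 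In short: same method, with your version trading the paper's clever choice of extension (which eliminates a case) for the more canonical center quotient (which costs one easy extra case).
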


It is worth pointing out that computing twisted (co)homology is a very
difficult task, in general. In degree one, the characteristic varieties
$\{ \VV_k^1 (G)\}$ depend only on the metabelian quotient $G/G''$. In
particular, there is a systematic way of producing solvable examples with
pretty complicated characteristic varieties $ \VV_1^1 (G)\subseteq \TT_G$,
by Fox calculus. See \cite{DPS4}.

In degree one, Theorem \ref{thm:vintro} says that
\begin{equation}
\label{eq:v11}
\VV^1_1 (G)\subseteq \{ 1\}\, ,
\end{equation}
for a finitely generated nilpotent group. A notable feature of property
\eqref{eq:v11} is that it can distinguish nilpotence from solvability, for
large classes of groups. 

\begin{theorem}
\label{thm:nilpvsmeta}
Let $G$ be a finitely generated, torsion-free metabelian group, with torsion-free
abelianization. Then $G$ is nilpotent if and only if $\VV^1_1 (G)\subseteq \{ 1\}$.
\end{theorem}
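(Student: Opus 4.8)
The forward implication is immediate: if $G$ is nilpotent, then \eqref{eq:v11} (a consequence of Theorem \ref{thm:vintro}) gives $\VV_1^1(G)\subseteq\{1\}$. The content is the converse, and the plan is to translate the geometric hypothesis on $\VV_1^1(G)$ into the algebraic statement that the augmentation ideal acts nilpotently on the Alexander invariant, which for a metabelian group is exactly nilpotence. Set $H=G_{\ab}$; by hypothesis $H\cong\Z^n$, so $\C[H]$ is a Laurent polynomial ring in $n$ variables, a domain, and $\TT_G=\Hom(H,\C^*)=(\C^*)^n$ is a torus whose identity $1$ corresponds to the augmentation maximal ideal $\m\subset\C[H]$. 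Since $G$ is metabelian, $G''=1$, so the Alexander invariant is $G'$ itself, a module over $\Z[H]$ via the conjugation action; by a classical theorem of P.~Hall on finitely generated metabelian groups it is finitely generated over $\Z[H]$. Write $M=G'\otimes_{\Z}\C$, a finitely generated $\C[H]$-module.

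First I would invoke the standard identification of the first characteristic variety with the support of the Alexander invariant away from the trivial character: for $\rho\neq1$ one has $\rho\in\VV_1^1(G)$ if and only if the fiber $M\otimes_{\C[H]}\C_{\rho}$ is nonzero, i.e. $\VV_1^1(G)\setminus\{1\}=\operatorname{Supp}_{\C[H]}(M)\setminus\{\m\}$. Hence the hypothesis $\VV_1^1(G)\subseteq\{1\}$ forces $\operatorname{Supp}(M)\subseteq\{\m\}$. Since $M$ is finitely generated and $\C[H]$ is Noetherian, $\operatorname{Supp}(M)=V(\ann M)$, so this inclusion gives $\sqrt{\ann M}\supseteq\m$, and therefore $\m^N M=0$ for some $N\ge0$.

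It then remains to descend this to the integers and read it off as nilpotence. The augmentation ideal of $\C[H]$ is $\m=I\otimes_{\Z}\C$, where $I\subset\Z[H]$ is the integral augmentation ideal, so $\m^N M=\bigl(I^N G'\bigr)\otimes_{\Z}\C=0$; thus $I^N G'$ is a torsion abelian group, and since $G$, hence its subgroup $I^N G'$, is torsion-free, $I^N G'=0$. Finally, the commutator/module dictionary valid for metabelian $G$, namely $[\gamma_k G,G]=I\cdot\gamma_k G$ for $k\ge 2$, which yields $\gamma_{k+2}G=I^k G'$, shows $\gamma_{N+2}G=1$, so $G$ is nilpotent.

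The main obstacle is the homological identification in the second paragraph: one must compare $H^1(G,\C_{\rho})$ with the Alexander invariant precisely away from $\rho=1$, where the tangent-cone phenomena at the origin do not interfere. The two hypotheses enter decisively and in distinct ways. Torsion-freeness of the abelianization makes $\C[H]$ a domain and $\TT_G$ a genuine torus, so that ``support contained in a single point'' has the clean consequence $\m^N M=0$; torsion-freeness of $G$ is exactly what permits the passage from the vanishing of $\bigl(I^N G'\bigr)\otimes_{\Z}\C$ back to $I^N G'=0$ over $\Z$.
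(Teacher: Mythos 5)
Your proof is correct and follows essentially the same route as the paper: identify $\VV_1^1(G)\setminus\{1\}$ with the support (zero set of the annihilator) of the complexified Alexander invariant $G'\otimes\C$ over $\C[G_{\ab}]$, apply the Nullstellensatz to the hypothesis to get $\m^N(G'\otimes\C)=0$, and use torsion-freeness of $G'$ to descend to $I^N G'=0$ over $\Z$. The only divergence is cosmetic: where you deduce nilpotence from $I^N G'=0$ directly via the dictionary $\G_{k+2}G=I^k G'$, the paper instead cites \cite[Proposition I.4.1]{HMR} for the equivalence of nilpotence with $I\subseteq\sqrt{\ann (G'/G'')}$.
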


See also Theorem \ref{thm:detect} for a similar result.

There is an infinitesimal analog of characteristic varieties, namely the
so-called {\em resonance varieties} $\{ \RR_k^i (M)\subseteq H^1(M, \C)\}_{i,k>0}$,
defined in terms of the cohomology ring of $M$. In degree one, there is a 
completely contrasting resonance counterpart of Theorem \ref{thm:vintro}:
the resonance varieties $\{ \RR_k^1 (G)\}_{k}$ of a two-step nilpotent, torsion-free
group $G$, can be as complicated as those of an  arbitrary complex $M$.
See Remark \ref{rem:resvschar}.

A finitely generated group $G$ has {\em Alexander polynomial} 
$\Delta^G\in \Z[t_1^{\pm 1}, \dots, t_n^{\pm 1}]$, $n=b_1(G)$,
well defined up to units of the Laurent polynomial ring, see e.g. \cite{MM}.
As a second application of Theorem \ref{thm:vintro}, we compute
Alexander polynomials of nilpotent groups.

\begin{corollary}
\label{cor:alexnilp}
If $G$ is a finitely generated nilpotent group, $\Delta^G$ is a non-zero
constant, up to units.
\end{corollary}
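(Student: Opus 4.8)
The plan is to read off $\Delta^G$ from the first characteristic variety, using \eqref{eq:v11}. Recall (see \cite{MM}) that, after base change to $\C$ and passage to the free abelianization, $\Delta^G$ is a generator of the smallest principal ideal of $R=\C[t_1^{\pm 1},\dots,t_n^{\pm 1}]$, $n=b_1(G)$, containing the zeroth Fitting ideal $E_0$ of the Alexander module $\mathfrak{A}=H_1(G;R)$, and that the support $V(E_0)=\mathrm{supp}_R(\mathfrak{A})$ agrees with $\VV_1^1(G)$ away from the trivial character. By \eqref{eq:v11} we have $\VV_1^1(G)\subseteq\{1\}$, so $\mathfrak{A}$ is supported at the single maximal ideal $\m=(t_1-1,\dots,t_n-1)$; in particular $\mathfrak{A}$ is an $R$-torsion module, which already gives $\Delta^G\ne 0$.

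For the generic case $n\ge 2$ I would argue by a codimension count. The point $\{1\}$ has codimension $n\ge 2$ in $\TT_G$, so $\sqrt{E_0}=\m$ has height $\ge 2$. If $\Delta^G$ were a non-unit, then $E_0\subseteq(\Delta^G)$ would force $\m=\sqrt{E_0}\subseteq\sqrt{(\Delta^G)}$, i.e. the height-$\ge 2$ ideal $\m$ would sit inside a principal (height-one) ideal, which is impossible. Equivalently, $V(\Delta^G)$ would be a nonempty hypersurface of pure codimension one contained in the codimension-$n$ set $\{1\}$. Hence $\Delta^G$ is a unit, i.e. a non-zero constant up to units.

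The cases $n\le 1$ I would treat by hand. For $n=0$ the torus $\TT_G$ is finite and $\Delta^G=1$ by convention. The only delicate case is $n=1$, where $\{1\}$ is itself a hypersurface in $\TT_G=\C^*$, so the support bound yields only $\Delta^G=c\,(t-1)^m$ and one must rule out $m>0$. Here $\Delta^G$ is a unit precisely when $\mathfrak{A}=H_1(K;\C)$ vanishes, where $K=\ker(G\surj\Z)$ is the kernel of the free abelianization. Since finitely generated nilpotent groups are polycyclic, $K$ is again a finitely generated nilpotent group, so $\mathfrak{A}=K_{\ab}\otimes\C$ vanishes exactly when $K$ is finite. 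To see this, note that $b_1(G)=1$ forces $\Lambda^2(G_{\ab}\otimes\Q)=0$; feeding this into the commutator surjections $G_{\ab}\otimes\gamma_i/\gamma_{i+1}\surj\gamma_{i+1}/\gamma_{i+2}$ shows inductively that every lower central quotient $\gamma_i/\gamma_{i+1}$ with $i\ge 2$ has rank zero, so the Hirsch length of $G$ is $1$ and $[G,G]$, hence $K$, is finite.

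I expect this last step—controlling the Alexander module at the trivial character when $b_1(G)=1$—to be the main obstacle, since the higher-rank cases follow immediately from the codimension count, whereas for $b_1(G)=1$ the point $\{1\}$ is a genuine candidate divisor and one must invoke the lower central series structure to exclude it.
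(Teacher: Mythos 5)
Your proof is correct, and its skeleton matches the paper's: use \eqref{eq:v11} together with the standard comparison (via \cite{DPS4}) between $\VV^1_1(G)$ and the zero locus of the Alexander ideal to confine everything to the point $\{1\}$, then rule that point out. For $n\ge 2$ your height/codimension count is essentially the paper's own dimension argument ($V(\Delta^G)$ would have to be all of $(\C^*)^n$ or a hypersurface, never a single point). The genuine divergence is in the low-rank case, exactly where you predicted the difficulty. The paper never splits on $n$: it invokes Lemma \ref{lem:converse}, a Fox-calculus fact valid for \emph{every} finitely presented group ($\Delta^G(1)=0$ forces $b_1(G)\ge 2$), so a nonempty $V(\Delta^G)=\{1\}$ automatically lands in the regime where the dimension count applies. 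You instead handle $n=1$ by a nilpotence-specific structural argument: $b_1(G)=1$ kills the ranks of all higher lower central quotients, so $\ker(G\surj\Z)$ is finite (using that subgroups of polycyclic groups are finitely generated), and hence the Alexander invariant vanishes outright. Both routes work; the paper's lemma is the more economical and more general tool (it needs no nilpotence, and it reappears in \S\ref{ss24}, where Proposition \ref{prop:delta1} is presented as its converse), whereas your argument exploits the group theory and yields a stronger conclusion in the rank-one case---the whole Alexander invariant is zero, not merely $\Delta^G(1)\ne 0$. Two small points to tidy: you define $\Delta^G$ via $\EE_0$ of the Alexander invariant $H_1(\ker(G\surj\Z^n),\C)$, while the paper takes $\gcd\EE_1(A_G)$ of the relative Alexander module; these agree for $n=1$ (the Crowell sequence splits) and have the same zero locus away from $1$ in general, which is all your codimension count uses, but the reconciliation should be stated (or the argument rerun with $\EE_1(A_G)$). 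Also, the equality $\sqrt{\EE_0}=\m$ presupposes the support is nonempty; if it is empty, $\Delta^G$ is already a unit and there is nothing to prove.
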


A group $G$ has {\em positive deficiency} (notation: $\df (G)>0$) if it has a finite
presentation with strictly more generators than relations. It is well-known that 
fundamental groups of link complements in $S^3$ have positive deficiency, see 
e.g. \cite{EN}. Corollary \ref{cor:alexnilp} in  turn provides the main step in
deriving our third application of Theorem \ref{thm:vintro}.

\begin{theorem}
\label{thm:nilposdef}
The only torsion-free, finitely generated nilpotent groups with positive
deficiency are $\Z$ and $\Z^2$. 
\end{theorem}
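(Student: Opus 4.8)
The plan is to prove both inclusions, the reverse one being immediate: $\Z = \langle x \mid \; \rangle$ and $\Z^2 = \langle x,y \mid [x,y]\rangle$ are finite presentations with one more generator than relator, so $\df(\Z), \df(\Z^2) \ge 1 > 0$. The substance is the forward direction. Let $G$ be torsion-free, finitely generated nilpotent with $\df(G) > 0$; I want $G \cong \Z$ or $\Z^2$. Set $n = b_1(G)$ and $\Lambda = \Z[t_1^{\pm 1}, \dots, t_n^{\pm 1}]$, and fix a finite presentation realizing the deficiency, with $g$ generators and $r$ relators, so $\df(G) = g - r$. The Fox Jacobian, specialized along the abelianization $G \to \Z^n$, is a $g \times r$ matrix $A$ over $\Lambda$ whose cokernel is the Alexander module and whose Fitting ideals carry $\Delta^G$. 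First I would pin down the deficiency: if $\df(G) \ge 2$ then $r \le g-2$, so $A$ has fewer than $g-1$ columns and its ideal of $(g-1)\times(g-1)$ minors, which computes $\Delta^G$ up to units, vanishes. Then $\Delta^G = 0$, contradicting Corollary \ref{cor:alexnilp}. Hence $\df(G) = 1$ and $r = g-1$.

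The heart of the argument is to bound $n$. Building the presentation $2$--complex $X$ and passing to the universal free abelian cover, the cellular chain complex over $\Lambda$ is $\Lambda^r \xrightarrow{A} \Lambda^g \xrightarrow{\partial_1} \Lambda$ with $\partial_1(e_j) = t_j - 1$. For a character $\rho \ne 1$ in the identity component $(\C^*)^n \subseteq \TT_G$, specialization gives $\dim H^1(G, \C_\rho) = (g-1) - \rk A(\rho) = r - \rk A(\rho)$, since $\partial_1$ has rank $1$ there. Theorem \ref{thm:vintro} forces $\VV_1^1(G) \subseteq \{1\}$, so this dimension is $0$ for every $\rho \ne 1$; equivalently the locus where the $g \times (g-1)$ matrix $A$ drops below rank $g-1$ is contained in $\{1\}$. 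At $\rho = 1$ one reads off $\rk A(1) = g - n$ from $H_1(X;\C) = \C^n$, so as soon as $n \ge 2$ the point $1$ itself lies in this rank-drop locus. But the locus where a $g \times (g-1)$ matrix has rank $\le g-2$ has codimension at most $2$ wherever it is nonempty, while here it is contained in a single point and so has dimension $0$; hence $n - 2 \le 0$. Therefore $b_1(G) = n \le 2$.

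It remains to identify $G$. Tensoring the lower central series quotients with $\Q$, the bracket exhibits $\bigoplus_{i\ge 2} (\gamma_i/\gamma_{i+1})\otimes\Q$ as a quotient of the positive-degree part of the free Lie algebra on $(\gamma_1/\gamma_2)\otimes\Q$, so each $\rk(\gamma_i/\gamma_{i+1})$ is governed by $n \le 2$. If $n = 1$, all higher quotients have rank $0$, the Hirsch length is $1$, and $G \cong \Z$. If $n = 2$ and $G$ is abelian, then $G \cong \Z^2$. The only remaining case is $n = 2$ with $G$ nonabelian; then $G$ has nilpotency class $\ge 2$ and Hirsch length $h \ge 3$, so by the Nomizu isomorphism $b_i(G) = b_i(\mathfrak g)$ for the Malcev Lie algebra $\mathfrak g$ together with Dixmier's theorem (which gives $b_p(\mathfrak g) \ge 2$ for $1 \le p \le h-1$) we get $b_2(G) \ge 2$. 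The standard deficiency bound $\df(G) \le b_1(G) - b_2(G)$ then yields $1 \le 2 - 2 = 0$, a contradiction. Hence $G$ is abelian of rank $\le 2$, i.e. $\Z$ or $\Z^2$.

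I expect the main obstacle to be the middle paragraph: justifying cleanly that, off the identity, the degree--one characteristic variety coincides with the rank--drop locus of the Fox Jacobian over $(\C^*)^n$, and then invoking the codimension--$\le 2$ property of determinantal loci to turn $\VV_1^1(G) \subseteq \{1\}$ into $b_1(G) \le 2$. The bookkeeping at $\rho = 1$ relating $\rk A(1)$ to $b_1(G)$, the passage between the identity component and the full character torus $\TT_G$, and the precise convention under which $\Delta^G$ generates the first nonvanishing Fitting ideal of the Alexander module all require care.
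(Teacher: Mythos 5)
Your proposal is correct, but it takes a genuinely different route from the paper's in both of its main steps, so a comparison is worthwhile. To force $b_1(G)\le 2$, the paper feeds the Eisenbud--Neumann almost-principality of the Alexander ideal of a positive-deficiency group (Example \ref{ex:d}, from \cite{EN}) into Proposition \ref{prop:delta1}, whose proof rests on the minimal presentation of the completed Alexander module (Lemma \ref{lem:minalex}, Corollary \ref{cor:elem}), and then contradicts Corollary \ref{cor:alexnilp}; you instead first pin down $\df(G)=1$ (your observation that $\df\ge 2$ forces $\EE_1(A_G)=0$, hence $\Delta^G=0$, is fine), then apply the classical Macaulay/Eagon--Northcott height bound $\mathrm{ht}\, I_{g-1}(A)\le 2$ for the maximal minors of a $g\times(g-1)$ matrix (available in \cite{E}) to the Fox Jacobian, using $\VV^1_1(G)\subseteq\{1\}$ from Theorem \ref{thm:vintro}; along the way you re-derive from the equivariant chain complex the identification of $\VV^1_1(G)$ away from $1$ with the rank-drop locus, which the paper quotes from \cite{DPS4}. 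For the case $b_1(G)=2$, the paper goes through Malcev Lie algebras --- minimal Malcev presentations (Proposition \ref{prop:malmin}), inertness in the sense of Halperin--Lemaire, and a PBW Hilbert-series computation (Lemmas \ref{lem:end} and \ref{lem:inert}) --- to conclude $\gr^2(G)\otimes\Q=0$; you instead kill the nonabelian case by combining Nomizu's isomorphism $b_i(G)=b_i(\mathfrak{g})$, Dixmier's bound $b_p(\mathfrak{g})\ge 2$ for $0<p<\dim\mathfrak{g}$, and the Euler-characteristic estimate $\df(G)\le b_1(G)-b_2(G)$. Your route is shorter and rests on heavy but standard citations, all correctly applied; the paper's route is self-contained within its own Alexander-module and Malcev machinery, works over fields of positive characteristic (Proposition \ref{prop:delta1}), and produces byproducts of independent interest (the Torres-type vanishing $\Delta^G(1)=0$ and the minimal-presentation results). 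Two small patches you should make: you never treat $b_1(G)=0$ (immediate: then $\gr^1(G)\otimes\Q=0$, so all lower central series quotients are finite, $G$ is finite, hence trivial by torsion-freeness, and the trivial group has deficiency $0$, contradicting $\df(G)=1$); and your displayed formula really computes twisted \emph{homology}, while $\VV^1_1$ is defined cohomologically --- this is harmless, exactly as in the paper's own proof of Theorem \ref{thm:vanish}, because the vanishing statement is quantified over all $\rho\ne 1$ and duality only exchanges $\rho$ with $\rho^{-1}$.
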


Taking into account that link groups are also torsion-free (as follows e.g.
from \cite{HS}), one may view the above theorem as an extension of a 
classical result in low-dimensional topology: the only nilpotent link groups
are $\Z$ and $\Z^2$; see for instance \cite{EM}, \cite{D}, and the references therefrom.

Along the way, we extend another result in classical link theory, which says
that the Alexander polynomial of the link group vanishes at $1$, for links with
at least $3$ components; see  Torres \cite{T}. A similar result holds, not only
for link groups but for all groups whose Alexander ideal is {\em almost principal},
in the sense of \cite{DPS4}. See Proposition \ref{prop:delta1}.

\section{Jumping loci and poly-cyclic groups}
\label{sec:jump}

\subsection{Jump loci}
\label{ss21}

Due to the fact that $\dim H^i (M, \C_1)=b_i(M)$, Theorem \ref{thm:vintro}
is a consequence of the following vanishing result. 

\begin{theorem}
\label{thm:vanish}
For any finitely generated nilpotent group $G$, for all $i\ge 0$ and 
$\rho \ne 1$, $H^i(G, \C_{\rho})=0$.
\end{theorem}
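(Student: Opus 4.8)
The plan is to induct on the nilpotency class $c$ of $G$, reducing everything to the abelian case via the Lyndon--Hochschild--Serre (LHS) spectral sequence of an extension by the center. Throughout I would use that a finitely generated nilpotent group is polycyclic, hence Noetherian: all of its subgroups --- in particular its center $Z=Z(G)$ --- are again finitely generated.

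First I would dispose of the base case, where $G$ is a finitely generated abelian group $A$. Choosing a decomposition $A\cong C_1\times\cdots\times C_k$ into cyclic groups (infinite or finite), a nontrivial character $\rho$ restricts nontrivially to at least one factor, say $C_1$. The coefficient module splits as an external tensor product $\C_\rho\cong \C_{\rho_1}\boxtimes\C_{\rho'}$, so the Künneth formula gives $H^*(A,\C_\rho)\cong H^*(C_1,\C_{\rho_1})\otimes H^*(C_2\times\cdots\times C_k,\C_{\rho'})$. It then suffices to check that $H^*(C,\C_\sigma)=0$ for a cyclic group $C$ and a nontrivial character $\sigma$: for $C=\Z$ this is the two-term complex $\C\xrightarrow{\sigma(1)-1}\C$ with $\sigma(1)\ne 1$ invertible, and for $C$ finite it follows since over $\C$ the higher cohomology of a finite group vanishes while $H^0=(\C_\sigma)^C=0$. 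Hence the first tensor factor vanishes and $H^*(A,\C_\rho)=0$.

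For the inductive step, assume $c\ge 2$ and the result for all finitely generated nilpotent groups of class $<c$. The center $Z$ is a nontrivial, finitely generated, central (hence normal, abelian) subgroup, and $\bar G:=G/Z$ is nilpotent of class $c-1$. I would run the LHS spectral sequence $E_2^{p,q}=H^p(\bar G,H^q(Z,\C_\rho))\Rightarrow H^{p+q}(G,\C_\rho)$ and split into two cases according to $\rho|_Z$. If $\rho|_Z\ne 1$, then by the abelian base case $H^q(Z,\C_\rho)=H^q(Z,\C_{\rho|_Z})=0$ for all $q$, so the entire $E_2$ page vanishes. If instead $\rho|_Z=1$, then $\rho$ factors through a character $\bar\rho$ of $\bar G$ which is still nontrivial (since $\rho\ne 1$); because $Z$ is central it acts trivially on $H^q(Z,\C)$, whence $H^q(Z,\C_\rho)\cong H^q(Z,\C)\otimes_\C\C_{\bar\rho}$ as $\C\bar G$-modules, and $E_2^{p,q}\cong H^p(\bar G,\C_{\bar\rho})\otimes H^q(Z,\C)$ vanishes by the induction hypothesis applied to $\bar G$. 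In either case $E_2=0$, so $H^{n}(G,\C_\rho)=0$ for all $n$, completing the induction.

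The crucial input, and the reason the statement fails for general solvable groups, is that nilpotence guarantees a nontrivial center whose quotient drops the class, so $\rho$ can be pushed to a smaller group while staying nontrivial and the residual $\bar G$-action on the fibre cohomology stays trivial. The only real delicacy is the bookkeeping in the case $\rho|_Z=1$: one must confirm that the conjugation action of $\bar G$ on $H^*(Z,\C)$ is trivial (immediate from centrality) and that the twist by $\bar\rho$ is correctly carried through the spectral sequence's coefficient system; once this is checked the collapse is automatic. Note also that one genuinely cannot hope to find a single central element $z$ with $\rho(z)\ne 1$ --- the integral Heisenberg group already fails this --- which is exactly why the descent through the quotient, rather than a direct Künneth splitting of $G$, is needed.
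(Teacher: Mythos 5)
Your proof is correct, and it follows the same broad strategy as the paper (induction on nilpotency class, abelian base case via cyclic groups and K\"unneth, Lyndon--Hochschild--Serre spectral sequence for a central extension), but the choice of central extension is genuinely different and changes the structure of the argument. You quotient by the center $Z(G)$, which forces a case split on $\rho|_Z$: when $\rho|_Z\ne 1$ you kill the fibre direction using the abelian case applied to $Z$, and this in turn requires knowing that $Z$ is finitely generated, i.e.\ the fact that subgroups of finitely generated nilpotent groups are finitely generated (which you correctly invoke via polycyclicity). The paper instead uses the extension $1\to \G_iG/\G_{i+1}G\to G/\G_{i+1}G\to G/\G_iG\to 1$ coming from the lower central series, with $i\ge 2$. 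The payoff of that choice is that the kernel lies in the commutator subgroup of $G/\G_{i+1}G$, so \emph{every} character automatically restricts trivially to the kernel and descends to a nontrivial character of the base; there is no case split, the fibre coefficients are always untwisted copies of $\C$, the vanishing always comes from the base direction by induction, and no finite-generation statement about subgroups is needed (only about quotients, which is trivial). A further cosmetic difference: the paper dualizes and runs the homological spectral sequence, while you work directly in cohomology; both are fine. Your closing remark that one cannot in general find a single central element on which $\rho$ is nontrivial (e.g.\ the Heisenberg group) correctly identifies why the descent through the quotient is unavoidable, and it applies equally to both versions of the argument.
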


\begin{proof}
By duality, the statement amounts to proving the vanishing of twisted homology.
This we do, by induction on the nilpotence class of $G$. If $G$ is abelian, the
claim follows easily for cyclic groups, by a direct computation using the standard
resolution of $\Z$ over $\Z G$, and then in general, by resorting to the K\" unneth
formula. For the induction step, we use the Hochschild--Serre spectral sequence.
See \cite{B}.

Recall from \cite[p. 171]{B} that, given a group extension,
\begin{equation}
\label{eq:ext}
1\to K\to G \stackrel{p}{\longrightarrow}B \to 1\, ,
\end{equation}
and a $G$--module $M$, there is a spectral sequence,
\begin{equation}
\label{eq:hs}
E^2_{s t}=H_s(B, H_t(K,M))\Rightarrow H_{s+t}(G, M)\, .
\end{equation}
If $P_{\bullet}$ is a $G$--resolution of $\Z$, the $B$--action on 
$H_*(K, M)= H_*(M\otimes_K P)$ is induced by the tensor product of
the $G$--action on $M$ and the $G$--action on $P$.

Assume that $M=\C_{\rho}$, with $1\ne \rho\in \TT_G$, and the $G$--action
on $M$ factors through $B$, that is, $\rho =p^* \rho'$, with 
$1\ne \rho'\in \TT_{B}$. Then $\C_{\rho}$ is $K$--trivial, and
$H_*(K, M)= H_*(K, \Z)\otimes \C$. The $B$--action on $H_*(K, M)$ is then
induced by the tensor product of the $G$--conjugation action on $H_*(K, \Z)$
and the $G$--action on $\C$. If moreover the extension \eqref{eq:ext}
is central, we infer that $H_*(K, \C_{\rho})$ is a direct sum of copies of
$\C_{\rho'}$, over $B$.

Consider now the central extension
\begin{equation}
\label{eq:central}
1\to \G _i G/\G_{i+1} G\rightarrow G/\G_{i+1} G \stackrel{p}{\longrightarrow}
G/\G_{i} G\to 1\, ,
\end{equation}
with $i\ge 2$. Here $\G_ j G$ are the $j$--fold commutators in $G$:
$\G _1G =G$, $\G _2G= G'= (G, G)$, and inductively $\G_ j G = (G, \G_{j-1} G)$,
where $(,)$ stands for the group commutator.

Pick any $1\ne \rho \in \TT_{G/\G_{i+1} G}$. Since $i\ge 2$, 
$\rho =p^* \rho'$, with 
$1\ne \rho'\in \TT_{G/\G_{i} G}$. By the above discussion, 
\[
E^2_{st}= H_s(G/\G_{i} G, \oplus \C_{\rho'})= \oplus H_s(G/\G_{i} G, \C_{\rho'}).
\]
By induction, $E^2_{st}=0$, for all $s,t$. Hence, $H_*( G/\G_{i+1} G, \C_{\rho})=0$,
by \eqref{eq:hs}.

Due to nilpotence, $G/\G_{j} G= G$, for $j$ large.
\end{proof}

Let $M$ be a connected CW complex with finite $1$--skeleton, as before. For
$z\in H^1(M, \C)$, denote by $\mu_z$ left-multiplication by $z$, acting on
$H^{\bullet}(M, \C)$. Since $z^2=0$, $(H^{\bullet}(M, \C), \mu_z)$ is a cochain complex.
The {\em resonance varieties} $\RR^i_k (M)$ are defined by
\begin{equation}
\label{eq:defres}
\RR^i_k (M)= \{ z\in H^1(M, \C)\; \mid \; \dim H^i (H^{\bullet}(M, \C), \mu_z)\ge k \}\, ,
\end{equation}
for $i,k>0$.

Let $\cup_M\colon \wedge^2 H^1(M, \Q)\to H^2(M, \Q)$ be the cup-product. 
Denote by $K_M$ the kernel of $\cup_M$, and by $\mu_M\colon \wedge^2 H^1(M, \Q)\surj DH^2_M$
the corestriction of $\cup_M$ to its image. Plainly, the resonance varieties 
$\RR^1_k (M)$ depend only on $\mu_M$. As before, the same constructions may be done
for a finitely generated group $G$, by taking $M=K(G, 1)$.

The {\em associated graded Lie algebra} of a group $G$, 
$\gr^*(G):= \oplus_{k\ge 1} \G_k G/\G_{k+1} G$, has Lie bracket $[,]$
induced by the group commutator $(,)$. It follows that $\gr^*(G)$ is generated
(as a Lie algebra) by $\gr^1(G)$, and likewise for the rational 
associated graded Lie algebra, $\gr^*(G)\otimes \Q$.

\begin{remark}
\label{rem:dual}
Recall the exact sequence 
\begin{equation}
\label{eq:mu}
0\to K_G\rightarrow \wedge^2 H^1(G, \Q)\stackrel{\mu_G}{\longrightarrow} 
DH^2_G \to 0\, .
\end{equation}
One also has the exact sequence 
\begin{equation}
\label{eq:br}
0\to N_G\rightarrow \wedge^2 \gr^1(G)\otimes \Q \stackrel{\beta_G}{\longrightarrow} 
\gr^2 (G)\otimes \Q \to 0\, ,
\end{equation}
where $\beta_G$ denotes the Lie bracket. It follows from \cite{S} that
\eqref{eq:br} is the vector space dual of \eqref{eq:mu}, for any
finitely generated group $G$.
\end{remark}

\begin{remark}
\label{rem:resvschar}
For any given complex $M$, one may find a two-step nilpotent, torsion-free
group $G$, such that $\RR_k^1 (M) = \RR_k^1 (G)\, , \forall k$. 

Set $\pi =\pi_1(M)$. Since the classifying map $M\to K(\pi, 1)$ induces
over $\Q$ a cohomology isomorphism in degree one and a monomorphism in degree
two, it follows that $\mu_M= \mu_{\pi}$, hence $\RR_*^1 (M) = \RR_*^1 (\pi)$.
Clearly, $\gr^{\le 2}(\pi)\cong \gr^{\le 2}(\pi/ \G_3 \pi)$, as Lie algebras.
In conclusion, the resonance varieties in degree one of a complex depend only on
the third nilpotent quotient of its fundamental group:
$\RR_*^1 (M) = \RR_*^1 (\pi/ \G_3 \pi)$; see Remark \ref{rem:dual}.
Set $N= \pi/ \G_3 \pi$. 
By construction, $N$ is a finitely generated, two-step nilpotent group (i.e., 
$\G _3 N= \{ 1 \}$). 
Finally, we may take $G= N/ {\rm Tors}~(N)$. In this way, 
we may also achieve torsion-freeness,
without changing $\mu$, since $H^*(G, \Q)=H^*(N, \Q)$, as rings; see \cite{HMR}.

Plainly, the groups $\pi$ and $\pi/ \pi''$ have the same third nilpotent quotient.
We infer that in particular resonance in degree one depends only on the 
metabelian quotient of the fundamental group: $\RR_*^1 (M) = \RR_*^1 (\pi/ \pi'')$. 
This is also true for characteristic varieties: $\VV_*^1 (M) = \VV_*^1 (\pi/ \pi'')$,
see e.g. \cite{DPS4}. However, it is no longer true, in general, that 
$\VV^1_1(G)= \VV^1_1(G/ \G_3 G)$. Indeed, when $G$ is a free group on $n\ge 2$
generators, it is easy to see that $\VV^1_1(G)= (\C^*)^n$, while 
$\VV^1_1(G/ \G_3 G)= \{ 1\}$, by Theorem \ref{thm:vintro}. 
\end{remark}

\subsection{Poly-cyclic groups}
\label{ss2p}

Let $\PP$ be a class of groups. Recall that a (length $\ell$) finite normal series 
of a group $G$ is a chain of subgroups,
\begin{equation}
\label{eq:chain}
1= G_{\ell} \subseteq \dots \subseteq G_{i+1}\subseteq G_i\subseteq \dots \subseteq G_1=G\, ,
\end{equation}
with $G_{i+1}$ normal in $G_i$; if all subgroups $G_i$ are normal in $G$, we speak
about an invariant series. The group $G$ is {\em poly}--$\PP$ if it has a
poly--$\PP$ series \eqref{eq:chain}, i.e., $G_i/G_{i+1}\in \PP$, for $1\le i<\ell$.
If moreover the series is invariant, we call $G$ {\em Poly}--$\PP$. In this case,
we have for each $1\le i<\ell$ an extension with kernel in $\PP$,
\begin{equation}
\label{eq:extp}
1\to G_i/G_{i+1}\rightarrow G/G_{i+1}\rightarrow G/G_i\to 1\, .
\end{equation}

It is well-known that the finitely generated nilpotent groups coincide with the
Poly-cyclic groups for which all extensions \eqref{eq:extp} are central. Moreover,
if $G$ is finitely generated nilpotent and all lower central series quotients,
$\G _jG/\G _{j+1}G$, are torsion-free, then $G$ is Poly--$\Z$, with all extensions
central.

\begin{example}
\label{ex:polyz}
Let $A$ be a finitely generated non-trivial abelian group, and $\alpha\in \Aut (A)$.
We may form the semi-direct product extension,
$1\to A\to A\rtimes_{\alpha} \Z \to \Z\to 1$, where $G:= A\rtimes_{\alpha} \Z$ is
finitely generated. Clearly, $G$ is Poly-cyclic (Poly--$\Z$), if $A$ is cyclic
(respectively $A=\Z$). Denote by $t\in G$ the canonical lift of $1\in \Z$.
By the construction of $G$, it is easily seen that the length $i+1$ commutator
$(t, (t,\dots (t,a)\dots ))$ is equal to $(\alpha -\id)^i(a)$, for any $a\in A$.
This remark may be used to show that, if $\alpha= -\id$ and $A=\Z$ or $A=\Z/k \Z$
(with $k$ odd), then $G$ is not nilpotent.
\end{example}

\begin{lemma}
\label{lem:corfinite}
Assume in \eqref{eq:ext} that all groups are finitely generated. If either the
extension is central, or the group $K$ is finite, then $\VV^i_1 (B)\subseteq \{ 1\}$,
for all $i$, implies that $\VV^i_1 (G)\subseteq \{ 1\}$,
for all $i$.
\end{lemma}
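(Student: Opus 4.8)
The plan is to run the Hochschild--Serre spectral sequence for the extension \eqref{eq:ext}, in exactly the way it was used to prove Theorem \ref{thm:vanish}, and to feed the hypothesis on $B$ into it. Since $\VV^i_1(G)\subseteq\{1\}$ is the assertion that $H^i(G,\C_\rho)=0$ for every character $\rho\ne 1$, I would fix such a $\rho$ and work with the cohomological version of \eqref{eq:hs},
\[
E_2^{st}=H^s\!\bigl(B,H^t(K,\C_\rho)\bigr)\Rightarrow H^{s+t}(G,\C_\rho)
\]
(equivalently, one dualizes and uses \eqref{eq:hs} directly, as in Theorem \ref{thm:vanish}). The argument then splits according to whether the restriction $\rho|_K$ is trivial, and in each branch I would show that the whole $E_2$-page vanishes.

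If $\rho|_K\ne 1$, I would prove that the fiber cohomology $H^t(K,\C_\rho)$ vanishes in every degree $t$, which forces $E_2^{st}=0$ for all $s,t$ and hence $H^*(G,\C_\rho)=0$. When $K$ is finite this is immediate: $\C K$ is semisimple, so $H^t(K,\C_\rho)=0$ for $t>0$, while $H^0(K,\C_\rho)=(\C_\rho)^K=0$ because $\rho|_K$ is a nontrivial character. When the extension is central, $K$ is a finitely generated abelian group and $\C_\rho|_K$ is again a nontrivial character, so the desired vanishing $H^t(K,\C_\rho)=0$ is precisely Theorem \ref{thm:vanish} applied to the abelian group $K$ (equivalently, the cyclic-plus-K\"unneth computation quoted there).

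If $\rho|_K=1$, then $\rho=p^*\rho'$ for a uniquely determined character $\rho'\in\TT_B$, and $\rho'\ne 1$ because $p$ is onto and $\rho\ne 1$. Now $\C_\rho$ is $K$-trivial, so I would identify the $B$-module structure of $H^t(K,\C_\rho)$ exactly as in the proof of Theorem \ref{thm:vanish}: in the central case $H^t(K,\C_\rho)$ is a direct sum of copies of $\C_{\rho'}$ over $B$, while for finite $K$ it is concentrated in degree $t=0$, where it equals $\C_{\rho'}$. Either way $E_2^{st}$ is a direct sum of copies of $H^s(B,\C_{\rho'})$, and this vanishes for every $s$ by the hypothesis $\VV^s_1(B)\subseteq\{1\}$ together with $\rho'\ne 1$. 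Hence again $E_2=0$ and $H^*(G,\C_\rho)=0$, which completes the argument.

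The only genuinely delicate point is the bookkeeping in the case $\rho|_K=1$: one must check that the $B$-action on the fiber cohomology twists the coefficients precisely by the \emph{nontrivial} character $\rho'$ rather than trivializing them, since it is exactly this nontriviality that makes the hypothesis on $B$ applicable. This is where centrality or finiteness of $K$ enters a second time --- centrality makes the conjugation action of $G$ on $H^*(K,\Z)$ trivial, and finiteness collapses the fiber to degree zero --- and it is the reason the two alternative hypotheses of the lemma can be handled by the same spectral-sequence machine through two different vanishing inputs for $H^*(K,-)$. I expect no further obstruction: the collapse of $E_2$ in all four sub-cases is the substance of the proof, and each piece is either elementary or already recorded in the proof of Theorem \ref{thm:vanish}.
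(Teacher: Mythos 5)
Your proposal is correct and follows essentially the same route as the paper: the Hochschild--Serre spectral sequence for \eqref{eq:ext}, split according to whether $\rho|_K$ is trivial, with vanishing of the fiber (co)homology supplied by the abelian case of Theorem \ref{thm:vanish} (central case) or by semisimplicity of $\C K$ (finite case), and the identification of the $B$-module structure as copies of $\C_{\rho'}$ when $\rho=p^*\rho'$. The only cosmetic difference is that you branch on $\rho|_K$ first and then on the two hypotheses, while the paper treats the central and finite cases separately; the substance is identical.
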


\begin{proof}
When the extension is central, one may use the same argument as in the proof of
Theorem \ref{thm:vanish}. (If $\rho\in \TT_G$ is not trivial on $K$, then the 
$E^2$ page from \eqref{eq:hs} vanishes, since $K$ is abelian.) 

Pick $1\ne \rho\in \TT_G$. Assuming $K$ is finite, we have $H_+(K, \C_{\rho})=0$;
see \cite{B}. If $\rho$ is non-trivial on $K$, we also have $H_0(K, \C_{\rho})=0$,
and we are done. Otherwise, $\rho =p^* \rho'$, with $1\ne \rho'\in \TT_B$. Due to
the finiteness of $K$, $H_+(K, \C_{1})=0$. As explained before, 
$H_0(K, \C_{1})=\C_{\rho'}$, over $B$. The spectral sequence \eqref{eq:hs}
collapses, and $H_*(G, \C_{\rho})= E^{\infty}_{*0}= E^{2}_{*0}=H_*(B, \C_{\rho'})=0$.
\end{proof}

By induction on length, we obtain the following extension of Theorem \ref{thm:vanish}.
Note that the generalization is strict; see Example \ref{ex:polyz}.

\begin{theorem}
\label{thm:polyvanish}
Let $G$ be a Poly-cyclic group with the property that all extensions 
\eqref{eq:extp} with infinite kernel are central. Then $\VV^i_1 (G)\subseteq \{ 1\}$,
for all $i$.
\end{theorem}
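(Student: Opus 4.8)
The plan is to induct on the length $\ell$ of an invariant Poly-cyclic series \eqref{eq:chain} for $G$, peeling off one cyclic factor at a time and invoking Lemma \ref{lem:corfinite} at each stage. The whole point is that a cyclic kernel produces exactly the dichotomy---infinite (hence, by hypothesis, central) versus finite---that Lemma \ref{lem:corfinite} is built to handle.

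First I would dispose of the base case, where the series has a single factor, so that $G$ itself is cyclic. If $G$ is finite, then $H_+(G, \C_{\rho})=0$ and, for $\rho\ne 1$, also $H_0(G, \C_{\rho})=0$; hence $\VV^i_1(G)\subseteq\{1\}$. If $G\cong \Z$, then $G$ is abelian, hence nilpotent, and Theorem \ref{thm:vanish} gives the same conclusion (alternatively one reads it off the two-term resolution of $\Z$ over $\Z[\Z]$).

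For the inductive step I would look at the bottom of the series. Since $G_{\ell}=1$, the innermost subgroup $G_{\ell-1}$ is cyclic and normal in $G$, and the corresponding extension \eqref{eq:extp} with $i=\ell-1$ reads
\[
1\to G_{\ell-1}\to G\to G/G_{\ell-1}\to 1\, .
\]
The kernel $K=G_{\ell-1}$ is cyclic, hence either finite or infinite; by the standing hypothesis, if it is infinite then this extension is central, and otherwise $K$ is finite. In either case Lemma \ref{lem:corfinite} applies---all three groups being finitely generated, as Poly-cyclic groups together with their subgroups and quotients are---so it suffices to prove $\VV^i_1(G/G_{\ell-1})\subseteq\{1\}$ for all $i$.

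Finally I would check that $B:=G/G_{\ell-1}$ inherits the hypotheses: the images $G_i/G_{\ell-1}$ form an invariant Poly-cyclic series for $B$ of length $\ell-1$, with factors again $G_i/G_{i+1}$, and its extensions \eqref{eq:extp} coincide with those of $G$ for $i\le \ell-2$, so the infinite-kernel ones stay central. The induction hypothesis then yields $\VV^i_1(B)\subseteq\{1\}$, and the previous paragraph upgrades this to $G$. The only point requiring genuine care is precisely this bookkeeping---verifying that the quotient series still meets the centrality condition and that finite generation is preserved---rather than any analytic difficulty, since Lemma \ref{lem:corfinite} already absorbs both the central-extension and the finite-kernel scenarios arising from a single cyclic factor.
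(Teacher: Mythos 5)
Your proof is correct and takes essentially the same route as the paper: the paper's entire argument is ``by induction on length,'' applying Lemma \ref{lem:corfinite} to each cyclic factor (central when infinite, by hypothesis; otherwise finite). Your explicit verification that the quotient $G/G_{\ell-1}$ inherits the invariant series, the centrality condition, and finite generation is exactly the bookkeeping the paper leaves to the reader.
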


\begin{lemma}
\label{lem:kisz}
Assume in \eqref{eq:ext} that $K=\Z$ and $B$ is finitely generated. 
If $\VV^1_1(G)\subseteq \{ 1\}$ and $\VV^i_1(B)\subseteq \{ 1\}$, for all $i$,
then the extension is central.
\end{lemma}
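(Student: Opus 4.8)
The plan is to argue by contradiction. Since $K=\Z$ is normal and abelian, the conjugation action of $G$ on $K$ factors through $B$, yielding a homomorphism $\epsilon\colon B\to \Aut(\Z)=\{\pm 1\}$; the extension \eqref{eq:ext} is central precisely when $\epsilon$ is trivial. So I would assume $\epsilon\ne 1$ and manufacture a nontrivial character lying in $\VV^1_1(G)$, contradicting the first hypothesis. Note that $\epsilon$ takes values in $\{\pm 1\}$, so the character $\rho:=p^*\epsilon\in\TT_G$ satisfies $\rho=\rho^{-1}$, and it is nontrivial because $p$ is onto and $\epsilon\ne 1$. Finite generation of $B$ (hence of $G$) guarantees that the character tori and the homology--cohomology duality used below are available.

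Next I would feed $\rho$ into the Hochschild--Serre spectral sequence \eqref{eq:hs} for the extension $1\to\Z\to G \stackrel{p}{\longrightarrow} B\to 1$. Since $K=\Z$ has homological dimension one, only the rows $t=0,1$ are nonzero, and $\rho$ is $K$--trivial, so $H_*(K,\C_\rho)=H_*(K,\Z)\otimes\C$. Reading off the $B$--module structure exactly as in the proof of Theorem \ref{thm:vanish} (conjugation on $H_*(K,\Z)$ tensored with the $\rho'$--action, where $\rho'=\epsilon$), I obtain $H_0(K,\C_\rho)=\C_{\epsilon}$ and, crucially, $H_1(K,\C_\rho)=\C_{\epsilon^2}=\C$ as $B$--modules, the twist by $\epsilon$ coming from the conjugation action on $H_1(K,\Z)=K=\Z$. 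Thus the two rows of the $E^2$ page are $E^2_{s0}=H_s(B,\C_{\epsilon})$ and $E^2_{s1}=H_s(B,\C)$.

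The bottom row vanishes identically: since $\epsilon\ne 1$, the hypothesis $\VV^i_1(B)\subseteq\{1\}$ for all $i$ gives $H_s(B,\C_\epsilon)=0$ for $s\ge 1$ by duality, while $H_0(B,\C_\epsilon)=(\C_\epsilon)_B=0$ because $\epsilon\ne 1$. With the whole bottom row zero there are no differentials touching $E^2_{01}=H_0(B,\C)=\C$, so $H_1(G,\C_\rho)\cong E^\infty_{01}=\C\ne 0$. By duality, together with $\rho=\rho^{-1}$, this forces $\rho\in\VV^1_1(G)$ with $\rho\ne 1$, contradicting $\VV^1_1(G)\subseteq\{1\}$; hence $\epsilon=1$ and the extension is central. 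The one step that needs genuine care---and which I view as the main obstacle---is the identification of the $B$--module structure on $H_1(K,\C_\rho)$: one must verify that the conjugation action on $H_1(K,\Z)=\Z$ is exactly $\epsilon$, so that the choice $\rho'=\epsilon$ trivializes the top row via $\epsilon^2=1$, while the action on $H_0(K,\Z)$ remains trivial irrespective of centrality.
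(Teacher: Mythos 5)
Your proof is correct and is essentially the paper's own argument: the paper also defines the conjugation character ($\gamma$, your $\epsilon$), feeds $\rho=p^*\rho'$ with $\rho'=\gamma^{-1}$ into the Hochschild--Serre spectral sequence, kills the bottom row $H_s(B,\C_{\gamma^{-1}})$ using the hypothesis on $\VV^*_1(B)$, and reads off $H_1(G,\C_\rho)=E^2_{01}=H_0(B,\C_1)=\C$ to contradict $\VV^1_1(G)\subseteq\{1\}$. The only cosmetic difference is that you exploit $\epsilon^2=1$ (so $\rho'=\epsilon=\gamma^{-1}$ and $\rho=\rho^{-1}$, which neatly disposes of the homology--cohomology duality issue), whereas the paper writes the same choice as $\rho'=\gamma^{-1}$ without needing $\gamma$ to have order two.
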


\begin{proof}
The conjugation action of $B=G/K$ on $H_1(K,\C_1)=\C$ is encoded by a character
$\gamma\in \TT_B$. We have to show that $\gamma=1$. 

Pick any $\rho'\in \TT_B$. In the spectral sequence \eqref{eq:hs} associated to
$\rho=p^* \rho'$, we have 
\[
H_{>1}(\Z, \C_1)=0\, ,\quad H_{0}(\Z, \C_1)=\C_{\rho'}\quad {\rm and} \quad 
H_{1}(\Z, \C_1)=\C_{\gamma \rho'}\, ,
\]
over $B$. Suppose $\gamma \ne 1$ and take $\rho'=\gamma^{-1}$. Then the $E^2$-page
is concentrated on $E^2_{s1}= H_s(B, \C_1)$, by our assumption on $\VV^*_1(B)$.
This implies that $H_1(G, \C_{\rho})=E^{\infty}_{01}=E^{2}_{01}=H_0(B, \C_1)=\C$.
Consequently, $1\ne \rho\in \VV^1_1(G)$, contradicting the hypothesis.
\end{proof}

We are now in a position to show that \eqref{eq:v11} is a powerful property,
which enables one to detect nilpotence in the class of Poly--$\Z$ groups.
Compare with Example \ref{ex:polyz}, case $A=\Z$.

\begin{theorem}
\label{thm:detect}
Let $G$ be a Poly--$\Z$ group. Then $G$ is nilpotent 
if and only if $\VV^1_1 (G)\subseteq \{ 1\}$.
\end{theorem}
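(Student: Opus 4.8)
The plan is to prove the two implications separately, the forward one being immediate and the converse carrying all the content. Since a Poly--$\Z$ group is finitely generated, the forward direction follows at once from Theorem \ref{thm:vintro}: if $G$ is nilpotent, then $\VV^1_1(G)\subseteq\{1\}$ is precisely property \eqref{eq:v11}. Thus the task reduces to showing that the condition $\VV^1_1(G)\subseteq\{1\}$ forces $G$ to be nilpotent.

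For the converse I would fix an invariant series \eqref{eq:chain} with $G_i/G_{i+1}\cong\Z$ and argue by induction on its length $\ell$, the case $\ell\le 1$ (and the degenerate case $G\cong\Z$) being trivial. The bottom term $N:=G_{\ell-1}$ is normal in $G$ and isomorphic to $\Z$, and $B:=G/N$ is Poly--$\Z$ of length $\ell-1$. The first key step is to transport the hypothesis to the quotient, namely to check that $\VV^1_1(G)\subseteq\{1\}$ implies $\VV^1_1(B)\subseteq\{1\}$. Indeed, for $1\ne\rho'\in\TT_B$ the local system $\C_{p^*\rho'}$ is trivial on $N$, so the inflation map $H^1(B,\C_{\rho'})\hookrightarrow H^1(G,\C_{p^*\rho'})$ from the five-term exact sequence is injective; hence $\rho'\in\VV^1_1(B)$ would produce $1\ne p^*\rho'\in\VV^1_1(G)$, a contradiction. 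By the induction hypothesis $B$ is then nilpotent.

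The second step upgrades this to all degrees and invokes Lemma \ref{lem:kisz}. Since $B$ is finitely generated nilpotent, Theorem \ref{thm:vintro} yields $\VV^i_1(B)\subseteq\{1\}$ for every $i$. Feeding $K=N=\Z$, the finitely generated group $B$, together with $\VV^1_1(G)\subseteq\{1\}$ and the all-degree vanishing $\VV^i_1(B)\subseteq\{1\}$, into Lemma \ref{lem:kisz}, I conclude that the extension $1\to N\to G\to B\to 1$ is central, i.e. the copy of $\Z$ lies in the center of $G$. A central extension of a nilpotent group is nilpotent: if $\G_{c+1}B=1$ then $\G_{c+1}G\subseteq N$ and $\G_{c+2}G\subseteq[N,G]=1$, so $G$ is nilpotent, closing the induction.

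The main obstacle, and the point deserving care, is the bootstrapping in the inductive step: Lemma \ref{lem:kisz} requires vanishing of $\VV^i_1(B)$ in \emph{all} degrees $i$, which is not part of the standing hypothesis and becomes available only after $B$ is known to be nilpotent. The induction is organized exactly so that the degree-one quotient estimate $\VV^1_1(B)\subseteq\{1\}$ (from the inflation argument) first delivers nilpotence of $B$, after which Theorem \ref{thm:vintro} supplies the all-degree vanishing needed to run Lemma \ref{lem:kisz} and centralize the bottom $\Z$. Everything else is routine.
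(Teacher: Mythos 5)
Your proof is correct and follows essentially the same route as the paper: induction on the length of the invariant series, transporting the hypothesis $\VV^1_1(G)\subseteq\{1\}$ to the Poly--$\Z$ quotient via the low-degree part of the Hochschild--Serre spectral sequence (your five-term inflation argument is exactly the paper's ``inspecting \eqref{eq:hs} in low degrees''), then using Theorem \ref{thm:vanish} to get all-degree vanishing for the nilpotent quotient and Lemma \ref{lem:kisz} to centralize the bottom copy of $\Z$. Your explicit justifications of the forward implication and of the fact that a central extension of a nilpotent group by a central $\Z$ is nilpotent are details the paper leaves implicit, but the argument is the same.
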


\begin{proof}
We have to show that $G$ must be nilpotent, if \eqref{eq:v11} holds. We induct on
the length $\ell$ of a poly--$\Z$ invariant series \eqref{eq:chain}.

Note first that $\VV^1_1(G/G_i)\subseteq \VV^1_1(G)$, for all $i$. This follows
easily by inspecting the spectral sequence \eqref{eq:hs} in low degrees, for
the extension $1\to G_i\to G\to G/G_i\to 1$ and a character of $G/G_i$. 

It follows that property \eqref{eq:v11} is inherited by the groups $G/G_i$, for
$1\le i<\ell$. Clearly, they all are  Poly--$\Z$, so induction applies. 
Consider now the extensions \eqref{eq:extp}, with kernel $\Z$. We know that
$\VV^1_1(G/G_{i+1})\subseteq \{ 1\}$. We infer from Theorem \ref{thm:vanish} that
$\VV^*_1(G/G_{i})\subseteq \{ 1\}$, since $G/G_i$ is nilpotent, by induction.
Lemma \ref{lem:kisz} tells us that the extension is central, hence 
$G/G_{i+1}$ is nilpotent, too. This gives the desired nilpotence of $G=G/G_{\ell}$.
\end{proof}

\section{Alexander polynomial and metabelian groups}
\label{sec:almeta}

\subsection{Alexander polynomial}
\label{ss22}

Let $M$ be a connected complex, with finite $1$--skeleton and fundamental group
$G=\pi_1(M, m_0)$. Set $G_{\ab}:= G/G'$ and 
$G_{\abf}:= G_{\ab}/ {\rm Tors}~(G_{\ab})\cong \Z^n$.
Let $p: X\to M$ be the Galois $\Z^n$--cover of $M$ corresponding to
the kernel of the canonical surjection, $G\surj G_{\abf}$. The finitely presented 
$\Z \Z^n$--module $A_G:= H_1(X, p^{-1}(m_0); \Z)$ is called the {\em Alexander module},
and depends only on $G$. Denote by $\EE_1(A_G) \subseteq \Z \Z^n$ the first 
{\em elementary ideal} of the Alexander module, and let 
$\Delta^G:= {\rm g.c.d.}~(\EE_1(A_G))\in \Z \Z^n$ be the Alexander polynomial.

If $G$ is a finitely presented group (e.g., a finitely generated nilpotent group),
everything may be computed in terms of a given finite presentation, as follows; see
Fox \cite{F}. Let $M$ be the $2$--complex associated to the presentation
$G=\langle x_1, \dots, x_m \mid w_1, \dots, w_s \rangle$, with cellular chain complex
$(C_{\bullet}, \partial_{\bullet})$. Denote by 
$(\widetilde{C_{\bullet}}, \widetilde{\partial_{\bullet}})$
the equivariant cellular chain complex of the universal cover $\widetilde{M}$,
\begin{equation}
\label{eq:ctilde}
\widetilde{C_{\bullet}}= \dots 0\to \Z G \otimes C_2 
\stackrel{\widetilde{\partial_2}}{\longrightarrow}
\Z G \otimes C_1 
\stackrel{\widetilde{\partial_1}}{\longrightarrow} \Z G \otimes C_0\, .
\end{equation}
The $m\times s$ matrix of ${\widetilde{\partial_2}}$ may be computed by Fox differential
calculus in the free group on $x_1, \dots, x_m$. It is equal to 
$\big ( \frac{\partial w_i}{\partial x_j} \big )$, modulo the defining relations
of $G$. The $1\times m$ matrix of ${\widetilde{\partial_1}}$ is simply
$(x_j -1)$, modulo defining relations. The Alexander module $A_G$ is presented by
\begin{equation}
\label{eq:foxpres}
A_G = \coker \big ( (\Z \Z^n)^s \stackrel{\AB_G}{\longrightarrow}
(\Z \Z^n)^m \big )\, ,
\end{equation}
where $\AB_G:= \Z \Z^n \otimes_{\Z G}\widetilde{\partial_2}$. Finally, one may also
recover $\partial_{\bullet}$ from \eqref{eq:ctilde}, by taking the reduction of 
$\widetilde{\partial_{\bullet}}$ modulo the augmentation ideal of $G$, $I_G\subseteq \Z G$.

\begin{lemma}
\label{lem:converse}
If $\Delta^G (1)=0$, $b_1(G)\ge 2$.
\end{lemma}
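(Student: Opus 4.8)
The plan is to evaluate the entire Alexander setup at the trivial character, that is, to apply the augmentation $\epsilon\colon \Z\Z^n\to \Z$, $t_i\mapsto 1$, and to reduce the statement to a computation with the abelianization $G_{\ab}$. The starting point is the Fox presentation \eqref{eq:foxpres}, $A_G=\coker(\AB_G)$ with $\AB_G$ a matrix over $\Z\Z^n$, so that the first elementary ideal $\EE_1(A_G)$ is generated by the appropriate minors of $\AB_G$. Since $\epsilon$ is a ring homomorphism, it carries a minor of $\AB_G$ to the same-sized minor of the evaluated matrix $\epsilon(\AB_G)$; hence $\epsilon(\EE_1(A_G))$ is the ideal of $\Z$ generated by those minors of $\epsilon(\AB_G)$. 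This is the concrete form of base change for elementary ideals, $\epsilon(\EE_1(A_G))=\EE_1(A_G\otimes_\epsilon \Z)$, and it is where most of the identification happens.

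The key step is to recognize $\epsilon(\AB_G)$ and the module it presents. As recorded just after \eqref{eq:foxpres}, reducing $\widetilde{\partial_2}$ modulo the augmentation ideal of $\Z G$ returns the integral boundary $\partial_2\colon C_2\to C_1$ of the presentation $2$--complex $M$; composing the abelianization $\Z G\to \Z\Z^n$ with $\epsilon$ is exactly this full augmentation, so $\epsilon(\AB_G)=\partial_2$. Because $M$ has a single $0$--cell, $\partial_1=0$, so $\coker(\partial_2)=C_1/\im\partial_2=H_1(M;\Z)=G_{\ab}$. Consequently $\epsilon(\EE_1(A_G))=\EE_1(G_{\ab})$, the first elementary ideal of the finitely generated abelian group $G_{\ab}$.

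Next I would feed in the hypothesis. Since $\Z\Z^n$ is a UFD and $\Delta^G=\gcd\EE_1(A_G)$, every generator of $\EE_1(A_G)$ is divisible by $\Delta^G$, so $\EE_1(A_G)\subseteq(\Delta^G)$. Applying $\epsilon$ gives $\epsilon(\EE_1(A_G))\subseteq(\Delta^G(1))$. If $\Delta^G(1)=0$, this forces $\epsilon(\EE_1(A_G))=0$, and therefore $\EE_1(G_{\ab})=0$ by the previous paragraph.

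It remains to compute elementary ideals of $G_{\ab}\cong\Z^n\oplus T$ with $T$ finite and $n=b_1(G)$. Writing $T\cong\bigoplus_j\Z/d_j$ and choosing the Smith-normal-form presentation, a short minor count shows that $\EE_1(\Z^n\oplus T)$ vanishes precisely when the free rank satisfies $n\ge 2$, while for $n\le 1$ it is nonzero (equal to $\Z$ when $n=0$, and to $(\prod_j d_j)=(\abs{T})$ when $n=1$). Hence $\EE_1(G_{\ab})=0$ yields $b_1(G)=n\ge 2$, as claimed. I expect the only real obstacle to be bookkeeping: keeping the minor/indexing convention for $\EE_1$ consistent across $\EE_1(A_G)$, its evaluation, and $\EE_1(G_{\ab})$, and applying the identity $\epsilon(\AB_G)=\partial_2$ with the correct matrix shape; no genuine estimate is involved.
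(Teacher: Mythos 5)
Your proof is correct and takes essentially the paper's route: the paper also evaluates everything at the trivial character, uses that $\Delta^G$ divides every generator of $\EE_1(A_G)$ so that $\Delta^G(1)=0$ forces the rank of the evaluated Fox matrix $\AB_G(1)=\partial_2$ to drop by two, and then reads $b_1(G)\ge 2$ off the complex \eqref{eq:ctilde} --- your detour through $\EE_1(G_{\ab})$ and Smith normal form is just the integral reformulation of that rational rank count. One immaterial slip: when $n=0$ the ideal $\EE_1(G_{\ab})$ need not equal $\Z$ (for $T=(\Z/2)^2$ it is $(2)$), but all your argument uses is that it is nonzero for $n\le 1$, which is true.
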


\begin{proof}
By the very construction of $\Delta^G$, our assumption implies that 
$\rk (\AB_G(1))\le s-2$, over $\Q$. The conclusion follows by resorting to
\eqref{eq:ctilde}.
\end{proof}

\subsection{Proof of Corollary \ref{cor:alexnilp}}
\label{ss23}

We have to prove that $\Delta^G \doteq 1$ (equality modulo units in $\C \Z^n$), if
$G$ is finitely generated nilpotent. In other words, we must show that the zero set
$V(\Delta^G)\subseteq \Hom (\Z^n, \C^*)=(\C^*)^n$ is empty.

It follows from \cite[Proposition 2.4]{DPS4}, via our Theorem \ref{thm:vintro}, that 
$V(\Delta^G)\subseteq \{ 1\}$. Were $V(\Delta^G)$ non-empty, we would have 
$V(\Delta^G) =\{ 1\}$, in particular $n\ge 2$, by virtue of Lemma \ref{lem:converse}.

Two cases may arise: either $\Delta^G=0$, or $\Delta^G \not\doteq 0,1$. In the first
situation, $\dim V(\Delta^G) =n$, and in the second $\dim V(\Delta^G) =n-1$. 
In both cases, we arrive at a contradiction.

\begin{example}
\label{ex:alexk}
Any non-zero constant may appear in Corollary \ref{cor:alexnilp}. Indeed, Fox calculus
applied to the standard presentation of $G= \Z \oplus \Z/k\Z$ 
shows that $\Delta^G$ equals $k$.
\end{example}

\subsection{Proof of Theorem \ref{thm:nilpvsmeta}}
\label{ss2m}

If $G$ is metabelian (i.e., if $G''$ is trivial), one has an extension 
$1\to B_{\ab}\to G\to G_{\ab}\to 1$,
where the Alexander invariant $B_{\ab}=G'/G''$ is endowed with the canonical
$\Z G_{\ab}$--module structure coming from conjugation in $G$. According 
to \cite[Proposition I.4.1]{HMR}, $G$ is nilpotent if and only if 
$I\subseteq \sqrt{\ann B_{\ab}}$, where $I$ is the augmentation ideal of $\Z G_{\ab}$.
Since $B_{\ab}$ is torsion-free, by assumption, we are left with proving the
above inclusion with $\C$--coefficients.

We also know that $G_{\ab}=G_{\abf}=\Z^n$. Therefore, the zero sets in $(\C^{*})^n$ of the
elementary ideals $\EE_1(A_G)$ and $\EE_0(B_{\ab})$ coincide, away from $1$;
see e.g. \cite[Corollary 2.3]{DPS4}. It is also well-known that in this case
the zero set of $\EE_1(A_G)$ coincides with $\VV^1_1(G)$, away from $1$
(see for instance \cite[Proposition 2.4]{DPS4}). Putting things together, we infer from
\eqref{eq:v11} that $V(\EE_0(B_{\ab}))\subseteq \{ 1\}= V(I)$, since $I$ is generated by
$(t_i-1)_{1\le i\le n}$.

A standard result in commutative algebra \cite[pp. 511--513]{E} says that the zero set
$V(\EE_0(B_{\ab}))$ equals $V(\ann B_{\ab})$. Thus, we have the inclusion 
$V(\ann B_{\ab})\subseteq V(I)$. By Hilbert's Nullstellensatz, 
$I\subseteq \sqrt{\ann B_{\ab}}$.

The proof of Theorem \ref{thm:nilpvsmeta} is complete.

\begin{remark}
\label{rem:exmeta}
Note that torsion-freeness is really necessary  in the above theorem. Indeed,
recall the group $G=\Z /k\Z \rtimes_{- \id}\Z$ from Example \ref{ex:polyz}, which
is clearly finitely generated metabelian, but not nilpotent. However,
$\VV^1_1(G)\subseteq \{ 1\}$ (use Lemma \ref{lem:corfinite}), the reason being the
existence of torsion in $G$.

Having Theorem \ref{thm:detect} in mind, we ought to point out that, in general,
the groups to which the nilpotence test from Theorem \ref{thm:nilpvsmeta} applies
need not be Poly--$\Z$. Indeed, it is easy to check by induction that the commutator
subgroup $G'$ must be finitely generated, if $G$ is Poly--$\Z$. On the other hand,
it is equally easy to see that the finitely generated metabelian quotient $G=\F/\F''$,
where $\F$ is a free group on $n\ge 2$ generators, is torsion-free with torsion-free
abelianization, but $G'$ is not finitely generated.

Finally, note also that there are many finitely generated nilpotent groups with
torsion-free lower central series quotients (hence, Poly--$\Z$), but not metabelian.
\end{remark}

\subsection{A converse to Lemma \ref{lem:converse}}
\label{ss24}

Let $G=\langle x_1, \dots, x_m \mid w_1, \dots, w_s \rangle$ be a finitely presented group.
Set $n= b_1(G)$. We are going to view $\Delta^G$ in $\k [[x_1, \dots, x_n]]$, $\k$ 
a field, by Magnus expansion, $e: \Z \Z^n\to \k [[x_1, \dots, x_n]]$. Here $e$ is the
ring homomorphism defined by sending each $t_i$ to $1+x_i$; it clearly sends 
$I:= I_{\Z^n}= (t_i-1)_i$ into the maximal ideal $\m = (x_i)_i$. Set
$n_p:= b_1(G, \k)$, noting that it depends only on $p=\ch (\k)$, and that $n_0=n$.

We first show how to find a {\em minimal} presentation of 
$\k [[x_1, \dots, x_n]]\otimes_{\Z \Z^n} A_G$.

\begin{lemma}
\label{lem:minalex}
Let $G=\langle x_1, \dots, x_m \mid w_1, \dots, w_s \rangle$ be a finitely presented group,
and $\k$ be a characteristic $p$ field. Then
\[
\widehat{A_G}:= \k [[x_1, \dots, x_n]]\otimes_{\Z \Z^n} A_G =
\coker \big ( \k [[x_1, \dots, x_n]]^t \stackrel{\overline{\AB}}{\longrightarrow}
\k [[x_1, \dots, x_n]]^{n_p} \big )\, ,
\] 
over $\k [[x_1, \dots, x_n]]$, where $\overline{\AB}\equiv 0$ mod $\m$, and $n_p -t=m-s$.
\end{lemma}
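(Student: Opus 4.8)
The plan is to derive the asserted minimal presentation directly from the Fox presentation \eqref{eq:foxpres} by a single reduction step over the complete local ring $R:=\k[[x_1,\dots,x_n]]$, and then to identify the number of surviving rows with $b_1(G,\k)$ by means of ordinary homology. First I would tensor \eqref{eq:foxpres} with $R$ along $e$. Since $-\otimes_R-$ is right exact, this gives $\widehat{A_G}=\coker(\overline{\AB_G}\colon R^s\to R^m)$, where $\overline{\AB_G}$ is obtained by applying $e$ to each entry of $\AB_G$. The preliminary computation is the reduction of this matrix modulo $\m$. As $e$ sends each $t_i$ to $1+x_i$, reduction mod $\m$ sends $t_i\mapsto 1$, so $\overline{\AB_G}\otimes_R\k$ is the augmentation $\AB_G(1)$ taken mod $p$. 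By the description $\AB_G=\Z\Z^n\otimes_{\Z G}\widetilde{\partial_2}$ together with the recipe recalled after \eqref{eq:ctilde} (reduction of $\widetilde{\partial_\bullet}$ modulo $I_G$), this augmentation is precisely the cellular boundary $\partial_2$ of the presentation $2$--complex $M$, so that $\overline{\AB_G}\otimes_R\k=\partial_2\otimes\k$.

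Next I would carry out the reduction over the local ring. Set $r:=\rk_\k(\partial_2\otimes\k)$. Since $R$ is local, the equality $\rk_\k(\overline{\AB_G}\otimes_R\k)=r$ forces some $r\times r$ minor of $\overline{\AB_G}$ to be a unit in $R$; permuting rows and columns, we may take it to be the top-left block, which is then invertible over $R$. Row and column operations (multiplication by suitable matrices in $GL_m(R)$ and $GL_s(R)$) normalize this block to $I_r$ and clear the entries around it, yielding the block form $\left(\begin{smallmatrix} I_r & 0\\ 0 & \overline{\AB}\end{smallmatrix}\right)$ with $\overline{\AB}$ of size $(m-r)\times(s-r)$. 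Because row and column operations over $R$ reduce mod $\m$ to invertible operations over $\k$, the block form still has rank $r$ after reduction, which forces $\overline{\AB}\otimes_R\k=0$, i.e.\ $\overline{\AB}\equiv 0\bmod\m$. Invertible changes of basis preserve the cokernel and the $I_r$ block contributes nothing, so $\widehat{A_G}=\coker(\overline{\AB}\colon R^{\,s-r}\to R^{\,m-r})$. Writing $t:=s-r$, it only remains to identify the number of rows $m-r$ with $n_p=b_1(G,\k)$; granting this, $n_p-t=(m-r)-(s-r)=m-s$.

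This last identification is the only substantive point, and is where I expect the argument to need genuine content rather than formal manipulation. By Nakayama's lemma and $\overline{\AB}\equiv 0\bmod\m$, the minimal number of generators of $\widehat{A_G}$ equals $\dim_\k(\widehat{A_G}\otimes_R\k)=m-r$. On the other hand, right exactness of $-\otimes_R\k$ gives $\widehat{A_G}\otimes_R\k=\coker(\overline{\AB_G}\otimes_R\k)=\coker(\partial_2\otimes\k)$. Since the one-vertex presentation complex $M$ has all its $1$--cells loops, its boundary $\partial_1$ vanishes (equivalently, $\widetilde{\partial_1}=(x_j-1)$ reduces to $0$ mod $I_G$); hence $H_1(M;\k)=\ker(\partial_1\otimes\k)/\im(\partial_2\otimes\k)=\coker(\partial_2\otimes\k)$. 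As $H_1(M;\k)=H_1(G;\k)$, we obtain $m-r=\dim_\k\coker(\partial_2\otimes\k)=b_1(G,\k)=n_p$, completing the proof. The main obstacle is exactly this step, namely recognizing the reduced Alexander cokernel as $H_1(G;\k)$ through the vanishing of $\partial_1$; once that is observed, the remaining work is the standard normal form over a local ring.
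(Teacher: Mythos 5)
Your proof is correct and follows essentially the same route as the paper: tensor the Fox presentation over $R=\k[[x_1,\dots,x_n]]$, observe that the matrix reduces mod $\m$ to $\partial_2\otimes\k$, split off the block that is invertible over the local ring, and identify the number of surviving generators with $n_p$ via $\coker(\partial_2\otimes\k)\cong H_1(G;\k)$, using $\partial_1=0$ for the one-vertex presentation complex. The only cosmetic difference is that you realize the splitting by a unit-minor/block-reduction (Schur complement) argument that needs only locality of $R$, whereas the paper lifts the vector-space decompositions $C_2\otimes\k=Z_2\oplus B_1$ and $C_1\otimes\k=N_1\oplus B_1$ through the complete $\m$-adic filtration; both make rigorous the same step, and your write-up additionally spells out the identification $\dim_\k\coker(\partial_2\otimes\k)=n_p$ that the paper merely asserts.
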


\begin{proof}
We know from \eqref{eq:foxpres} that 
$\widehat{A_G}= \coker (\widehat{\AB_G}:= \k [[x_1, \dots, x_n]]\otimes_{\Z \Z^n} \AB_G)$,
over $\k [[x]]:= \k [[x_1, \dots, x_n]]$. Use linear algebra to find $\k$--vector
space decompositions, $C_2\otimes \k =Z_2\oplus B_1$ and $C_1\otimes \k =N_1\oplus B_1$,
with respect to which $\widehat{\AB_G}(1)= 0\oplus \id$. It follows that 
$\dim N_1=n_p$ and $\dim N_1- \dim Z_2=m-s$.

Consider now the $\k [[x]]$--submodule $\widehat{\AB_G}(\k [[x]]\otimes B_1)$
of the free module $\k [[x]]\otimes C_1$. Since the $\m$--adic filtration of 
$\k [[x]]$ is complete, one may look at the associated graded picture to infer from
$\widehat{\AB_G}(1)= 0\oplus \id$ that the above submodule is a free summand, with
complement isomorphic to $\k [[x]]\otimes N_1$. The lemma follows.
\end{proof}

\begin{corollary}
\label{cor:elem}
Let $G$ be a finitely presented group and $\k$ a characteristic $p$ field. Set
$n=b_1(G, \Q)$ and $n_p=b_1(G, \k)$. Then
\[
\EE_i (A_G)\subseteq \m^{n_p-i}\, , \quad {\rm if}\quad i<n_p\, ,
\]
where $\EE_i$ denotes the $i$--th elementary ideal, and $\m$ is the maximal ideal
of the formal power series ring $\k [[x_1, \dots, x_n]]$.
\end{corollary}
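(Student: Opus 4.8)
The plan is to read off the statement directly from the minimal presentation furnished by Lemma \ref{lem:minalex}. That lemma tells us that, over the power series ring $\k[[x_1,\dots,x_n]]$, the completed Alexander module $\widehat{A_G}$ is the cokernel of a map $\overline{\AB}\colon \k[[x]]^t \to \k[[x]]^{n_p}$ whose matrix entries all lie in the maximal ideal $\m$, since $\overline{\AB}\equiv 0 \bmod \m$. Because completion is flat over $\Z\Z^n$ (the map $e$ sends $I$ into $\m$), the elementary ideals are compatible with this base change: the image of $\EE_i(A_G)$ in $\k[[x]]$ under $e$ is exactly $\EE_i(\widehat{A_G})$, so it suffices to bound the elementary ideals of the matrix $\overline{\AB}$.

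First I would recall the definition: $\EE_i(\widehat{A_G})$ is generated by the $(n_p - i)\times(n_p-i)$ minors of the presentation matrix $\overline{\AB}$ (for $i < n_p$, these minors are genuine, of positive size). Each such minor is a determinant of a square submatrix of size $n_p - i$, and \emph{every} entry of $\overline{\AB}$ lies in $\m$ by the congruence $\overline{\AB}\equiv 0 \bmod \m$. Expanding the determinant as a signed sum of products of $n_p - i$ entries, each product is a product of $n_p - i$ elements of $\m$, hence lies in $\m^{n_p - i}$. Since $\m^{n_p-i}$ is an ideal, the sum lies there too, and therefore every generator of $\EE_i(\widehat{A_G})$ lies in $\m^{n_p-i}$. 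This gives $\EE_i(\widehat{A_G})\subseteq \m^{n_p-i}$, and transporting back along $e$ yields the claimed inclusion $\EE_i(A_G)\subseteq \m^{n_p-i}$ when $i<n_p$.

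The only genuine subtlety—what I would flag as the main obstacle—is the compatibility of elementary ideals with the base change $\Z\Z^n \to \k[[x]]$. Elementary ideals are defined via a fixed finite presentation, but they are presentation-independent invariants of the module, and they commute with arbitrary ring extensions $R \to S$ in the sense that $\EE_i(S\otimes_R A) = S\cdot \EE_i(A)$; this is standard (cf.\ the treatment in \cite{E}). Granting this, the image of $\EE_i(A_G)$ generates $\EE_i(\widehat{A_G})$, and the minor computation above finishes the argument. The remaining bookkeeping—that $\overline{\AB}$ is genuinely an $n_p \times t$ matrix with the stated numerology $n_p - t = m - s$—is already supplied by Lemma \ref{lem:minalex} and requires no further work here.
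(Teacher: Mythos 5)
Your proposal is correct and takes essentially the same route as the paper's proof: reduce via base-change naturality of elementary (Fitting) ideals to the minimal presentation $\overline{\AB}$ of Lemma \ref{lem:minalex}, whose entries all lie in $\m$, so that each $(n_p-i)$--minor, being a sum of products of $n_p-i$ elements of $\m$, lies in $\m^{n_p-i}$. The one inaccuracy is your flatness claim---$\k[[x_1,\dots,x_n]]$ is \emph{not} flat over $\Z\Z^n$ when $\ch(\k)=p>0$, since $p$ is a nonzerodivisor that maps to zero---but it is harmless, because Fitting ideals commute with \emph{arbitrary} base change, which is the property you correctly invoke and exactly what the paper uses.
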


\begin{proof}
Due to the natural behaviour of elementary ideals under base change, we may use
Lemma \ref{lem:minalex} to replace $A_G$ by $\coker (\overline{\AB})$. By adding
trivial relations if necessary, we may also assume that $m\le s$. If $i<n_p$,
$\EE_i (\widehat{A_G})$ is generated by the $(n_p-i)$--minors of the minimal presentation
matrix $\overline{\AB}$, and we are done.
\end{proof}

Following \cite{DPS4}, we will say that the {\em Alexander ideal} $\EE_1(A_G)$ is
{\em almost principal}, over a field $\k$, if 
\begin{equation}
\label{eq:aprinc}
I^d \cdot (\Delta^G)\subseteq \EE_1(A_G)\, ,\quad {\rm in}\quad \k G_{\abf}\, ,
\end{equation}
for some $d\ge 0$, where $I$ denotes the augmentation ideal of $G_{\abf}$.

\begin{example}
\label{ex:d}
Property \eqref{eq:aprinc} holds for the following classes of groups:
positive-deficiency groups $G$ with $b_1(G)\ge 2$, for which $d=1$ (see \cite{EN});
fundamental groups of closed, connected, orientable $3$--manifolds, where
$d=2$ (see \cite{MM}).
\end{example}

\begin{prop}
\label{prop:delta1}
Let $G$ be a finitely presented group with $n:= b_1(G, \Q)>0$, and $\k$ a
characteristic $p$ field. Set $n_p:= b_1(G, \k)$. If $\EE_1 (A_G)$ is 
almost principal over $\k$ and $n_p>d+1$, then $\Delta^G\in \m^{n_p -d-1}$,
where $\m$ is the maximal ideal of $\k [[x_1, \dots, x_n]]$. In particular,
$\Delta^G(1)=0\in \k$. 
\end{prop}

\begin{proof}
Corollary \ref{cor:elem} guarantees that $\EE_1 (A_G)\subseteq \m^{n_p-1}$.
Hence, $x_1^d \cdot \Delta^G \in \m^{n_p-1}$, by \eqref{eq:aprinc}. Clearly, we 
may suppose that $\Delta^G \ne 0$ in $\k [[x]]$, and $d>0$. Take the initial term of 
$\Delta^G$ in $\k [[x]]$ to conclude that $\Delta^G\in \m^{n_p -d-1}$, as asserted.
\end{proof}

\begin{remark}
\label{rem:step1}
By resorting to Proposition \ref{prop:delta1} (over $\C$) and Corollary
\ref{cor:alexnilp}, the proof of Theorem \ref{thm:nilposdef} is reduced to
verifying the following claim.
\begin{quotation}
A torsion-free, finitely generated nilpotent group $G$, with $b_1(G)\le 2$ and
$\df (G)>0$, is isomorphic to $\Z$ or $\Z^2$. 
\end{quotation}

\noindent See Example \ref{ex:d}.
\end{remark}

\section{Malcev Lie algebras}
\label{sec:mal}

We will handle the case $b_1(G)\le 2$ by Malcev Lie algebra techniques.

\subsection{Associated graded and Malcev Lie algebras of groups}
\label{ss31}

A {\em Malcev Lie algebra} is a rational Lie algebra $L$, together with a descending, 
complete $\Q$--vector space filtration $\{ F_r L\}_{r\ge 1}$, such that:
$F_1L= L$; $[F_r L,F_s L]\subseteq F_{r+s} L$, for all $r,s$; the 
associated graded Lie algebra, $\gr^*(L):= \oplus_{r\ge 1} F_r L/F_{r+1} L$, is
generated in degree $1$. 
There is a natural Malcev Lie algebra $E_G$, associated to a group $G$. See 
\cite[Appendix A]{Q} for details.

For example, if $G$ is the free group on $x_1,\dots, x_n$, $E_G$ is the free
Malcev Lie algebra $\wL (x_1, \dots, x_n)$, that is, the degree completion of the
free $\Q$--Lie algebra $\LL^*(x_1, \dots, x_n)$ graded by bracket length, endowed 
with the canonical filtration of formal series. If 
$G= \langle x_1, \dots, x_m \mid w_1, \dots, w_s \rangle$,
\begin{equation}
\label{eq:malfp}
E_G \cong \wL (x_1, \dots, x_m)/ \langle \langle r_1, \dots, r_s \rangle \rangle\, ,
\end{equation}
where $\langle\langle \bullet \rangle\rangle$ denotes the closed Lie ideal
generated by $\bullet$. The filtration of $E_G$ comes from $\wL$, and the Lie 
relators $r_i$ are constructed from the corresponding group relators $w_i$, by
Campbell-Hausdorff expansion. See \cite{P}.

A fundamental property of Quillen's construction $E_G$ is the existence of a natural
graded Lie algebra isomorphism,
\begin{equation}
\label{eq:gr}
\gr^*(E_G) \cong \gr^*(G)\otimes \Q\, .
\end{equation}

In dual form, the Malcev Lie algebra of a finitely generated group $G$ is nothing 
else but D. Sullivan's $1$--minimal model of $G$. Consequently

\begin{theorem}[\cite{S}] 
\label{thm:mal1min}
If $\varphi\colon G\to K$ is a morphism between finitely
generated groups, inducing over $\Q$ a cohomology isomorphism in degree $1$ and a 
cohomology monomorphism in degree $2$, then $E_G\cong E_K$, as filtered Lie algebras.
\end{theorem}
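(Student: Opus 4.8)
The plan is to pass from Lie algebras to their Koszul-dual differential graded algebras and argue with $1$--minimal models, exploiting the equivalence recorded just above the statement: the filtered Lie algebra $E_G$ is, in dual form, Sullivan's $1$--minimal model $\mathcal{M}_G$ of $G$, built from a commutative differential graded algebra (CDGA) model $A_G$ of $K(G,1)$ over $\Q$. Under this dictionary, a filtered isomorphism of Malcev Lie algebras corresponds to an isomorphism of $1$--minimal models preserving the canonical (word-length, i.e.\ Hirsch-stage) filtration. So I would reduce the theorem to the following purely CDGA statement: a morphism $f\colon A_K\to A_G$ of connected CDGAs that is an isomorphism on $H^1$ and a monomorphism on $H^2$ induces an isomorphism of $1$--minimal models. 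The map $\varphi$ supplies exactly such an $f=\varphi^*$ on cochain-level models, with the required behavior in degrees $1$ and $2$ by hypothesis.

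First I would fix $1$--minimal model maps $\rho_G\colon \mathcal{M}_G\to A_G$ and $\rho_K\colon \mathcal{M}_K\to A_K$, each an isomorphism on $H^1$ and a monomorphism on $H^2$ by construction. Composing, $\varphi^*\circ\rho_K\colon \mathcal{M}_K\to A_G$ is again an isomorphism on $H^1$ and a monomorphism on $H^2$ (an isomorphism after an isomorphism on $H^1$; a monomorphism after a monomorphism on $H^2$). Since $\mathcal{M}_K$ is free on degree-one generators and $\rho_G$ is a $1$--minimal model, the standard lifting property of minimal models produces $\psi\colon \mathcal{M}_K\to \mathcal{M}_G$ with $\rho_G\circ\psi$ homotopic to $\varphi^*\circ\rho_K$. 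Passing to cohomology and using that $\rho_G$ is itself an isomorphism on $H^1$ and a monomorphism on $H^2$ then forces $\psi$ to be an isomorphism on $H^1$ and a monomorphism on $H^2$.

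It then remains to prove the key lemma: a morphism $\psi$ between $1$--minimal models that is an isomorphism on $H^1$ and a monomorphism on $H^2$ is an isomorphism. I would argue by induction on the Hirsch-extension stages $\mathcal{M}_K^{(i)}\subseteq \mathcal{M}_K$ and $\mathcal{M}_G^{(i)}\subseteq \mathcal{M}_G$. The base stage is the hypothesis on $H^1$, since the degree-one generating spaces are $H^1(A_K)\cong H^1(A_G)$. At each step the new indecomposable generators are indexed by a basis of the space of $H^2$--obstructions of the previous stage, and the conditions ``iso on $H^1$, mono on $H^2$'' are precisely what identify these obstruction spaces on the two sides; hence $\psi$ carries the stage-$i$ generators of $\mathcal{M}_K$ isomorphically onto those of $\mathcal{M}_G$, and the induction closes. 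Because $\psi$ matches the stages, dualizing returns a filtered Lie algebra isomorphism $E_G\cong E_K$, as claimed.

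The main obstacle is exactly this inductive stage-by-stage comparison: one must verify that the ``new'' part of $H^2$ created at each Hirsch extension is the invariant controlled by the degree-$1$ and degree-$2$ cohomology of the map, so that the added generators correspond bijectively. I expect the bookkeeping of the obstruction spaces---tracking how each Hirsch extension both produces and cancels classes in $H^2$, and checking that a morphism which is a monomorphism on $H^2$ cannot collapse a genuinely new generator---to be the delicate point. Once this is in place, injectivity and surjectivity of $\psi$ on every generating space, hence on the whole completed algebra, follow formally, as does the compatibility with filtrations needed for the filtered conclusion.
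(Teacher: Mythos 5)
Your proposal is correct and takes essentially the same route as the paper: the paper gives no independent proof, but derives the statement from Sullivan \cite{S} via precisely the dictionary you use (the Malcev Lie algebra is the dual of the $1$--minimal model), and your stage-by-stage induction on Hirsch extensions, where the $H^2$--monomorphism hypothesis identifies the obstruction spaces $\ker\bigl(H^2(\mathcal{M}^{(i)})\to H^2\bigr)$ on the two sides, is the standard argument behind that cited result. The bookkeeping you flag as delicate does close exactly as you predict, so there is no gap.
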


\subsection{A second reduction}
\label{ss32}

The lemma below will be used to reduce the claim from Remark \ref{rem:step1} 
to a statement about Malcev Lie algebras.

\begin{lemma}
\label{lem:malab}
A finitely generated nilpotent, torsion-free group $G$ is isomorphic to $\Z^n$ if 
and only if $E_G\cong E_{\Z^n}$, as filtered Lie algebras.
\end{lemma}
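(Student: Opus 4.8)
The forward implication I would dispose of at once: the assignment $G\mapsto E_G$ is functorial, so an isomorphism $G\cong \Z^n$ induces a filtered Lie algebra isomorphism $E_G\cong E_{\Z^n}$. All the content is in the converse, and the plan is to extract information by passing to associated graded Lie algebras through the natural isomorphism \eqref{eq:gr}.

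First I would identify $\gr^* E_{\Z^n}$. Since $\Z^n$ is abelian, its lower central series is $\G_1=\Z^n$, $\G_2=0$, so $\gr^*(\Z^n)\otimes \Q$ is concentrated in degree $1$, where it equals $\Q^n$. By \eqref{eq:gr}, the same holds for $\gr^* E_{\Z^n}$ (in fact $E_{\Z^n}$ is then the abelian Lie algebra $\Q^n$, its associated graded being generated in degree $1$ and living only there). A filtered isomorphism $E_G\cong E_{\Z^n}$ induces a graded isomorphism $\gr^* E_G\cong \gr^* E_{\Z^n}$. Feeding this back through \eqref{eq:gr} applied to $G$ yields
\[
\gr^*(G)\otimes \Q \;\cong\; \gr^* E_G \;\cong\; \gr^* E_{\Z^n} \;\cong\; \gr^*(\Z^n)\otimes \Q,
\]
which is concentrated in degree $1$. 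In other words, $\gr^j(G)\otimes \Q=0$ for all $j\ge 2$.

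The final step is to convert this rational vanishing into genuine commutativity of $G$, and this is where torsion-freeness enters. Let $c$ be the nilpotency class of $G$, so that $\G_{c+1}G=1$ while $\G_c G\ne 1$, and suppose for contradiction that $c\ge 2$. Then $\gr^c(G)=\G_c G$ is a subgroup of $G$, hence torsion-free; it is nontrivial by the choice of $c$, and finitely generated because finitely generated nilpotent groups are polycyclic, hence Noetherian. A nontrivial, finitely generated, torsion-free abelian group satisfies $\gr^c(G)\otimes \Q\ne 0$, contradicting the degree-$1$ concentration established above (recall $c\ge 2$). Hence $c=1$, so $G$ is abelian; being finitely generated and torsion-free, $G\cong \Z^r$ for some $r$. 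Finally $r=n$, since $\dim_{\Q}\gr^1(G)\otimes \Q=\dim_{\Q}\gr^1(\Z^n)\otimes \Q=n$, giving $G\cong \Z^n$.

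The only genuinely delicate point is the computation of $\gr^* E_{\Z^n}$ together with the correct bookkeeping through the two instances of \eqref{eq:gr}; everything afterward is formal, apart from the essential use of torsion-freeness, which is precisely what makes the top graded piece $\gr^c(G)$ rationally visible. As an alternative to this last step, one could argue through the Malcev completion: an abelian $E_G$ forces the divisible nilpotent group $\exp(E_G)$ to be abelian, and $G$ embeds in it as a finitely generated subgroup of $\Q^n$, hence is free abelian of rank $n$.
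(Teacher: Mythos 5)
Your proof is correct and follows essentially the same route as the paper: both pass through the isomorphism \eqref{eq:gr} to deduce that $\gr^{>1}(G)\otimes\Q=0$, and then use torsion-freeness of $G$ to upgrade this rational vanishing to $\G_2 G=1$. The only (immaterial) difference is in the last step: the paper notes that all quotients $\G_iG/\G_{i+1}G$, $i\ge 2$, are finite and concludes that $\G_2 G$ is a finite, hence trivial, subgroup, whereas you derive a contradiction directly at the top graded piece $\G_c G$, which is a nontrivial finitely generated torsion-free central subgroup and therefore rationally visible.
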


\begin{proof}
Assuming $E_G\cong E_{\Z^n}$, we infer from \eqref{eq:gr} that $\gr^1(G)\otimes \Q=\Q^n$
and $\gr^{>1}(G)\otimes \Q=0$. Therefore the groups $\G _i G/\G _{i+1}G$ are finite,
for $i\ge 2$. Since $\G _i G= 1$ for $i>>0$, we deduce that $\G _2 G$ is finite,
hence trivial, by torsion-freeness. This means that $G$ must be abelian, whence
the result.
\end{proof}

\begin{remark}
\label{rem:step2}
Using the above lemma, we may suppose in Remark \ref{rem:step1} that $b_1(G)=2$,
having to show that $\mu_G \ne 0$, in the notation from \S\S \ref{ss21}.

Indeed, if $b_1(G)=0$ then $E_G\cong E_{\{ 1 \}}$, by Theorem \ref{thm:mal1min},
hence $G=\{ 1\}$. But this cannot happen, since the deficiency 
of the trivial group is zero. If $b_1(G)=1$, the same argument shows that
$G\cong \Z$, as claimed. Finally, assume $b_1(G)=2$ and $\mu_G \ne 0$.
Then Theorem \ref{thm:mal1min}, applied to the canonical morphism 
$\varphi \colon G\to G_{\abf}\cong \Z^2$, gives an isomorphism
$E_G\cong E_{\Z^2}$. Invoking once more Lemma \ref{lem:malab}, we may thus complete
the proof of the claim from Remark \ref{rem:step1}.
\end{remark}

\subsection{Minimal Malcev Lie algebras}
\label{ss33}

We will need the following noncommutative analog of the {\em minimal} 
presentation constructed in Lemma \ref{lem:minalex}.

\begin{prop}
\label{prop:malmin}
Let $E = \wL (x_1, \dots, x_m)/ \langle \langle r_1, \dots, r_s \rangle \rangle$ be a
finitely presented Malcev Lie algebra. There is an isomorphism of filtered Lie algebras,
$$E \cong \wL (x'_1, \dots, x'_n)/ \langle \langle r'_1, \dots, r'_t \rangle \rangle \, ,$$
where $r'_j\in F_2 \wL (x')$, for all $j$, and $n-t=m-s$.
\end{prop}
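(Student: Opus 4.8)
The plan is to carry out, inside the complete free Lie algebra $\wL(x) := \wL(x_1,\dots,x_m)$, the exact analogue of the linear-algebra reduction used in Lemma \ref{lem:minalex}, with reduction modulo $\m$ replaced by passage to the linear (bracket-length-one) part of the filtration. Each relator decomposes as $r_i = \ell_i + \rho_i$, where $\ell_i$ lies in the $\Q$-span of $x_1,\dots,x_m$ and $\rho_i \in F_2\wL(x)$; let $r$ denote the dimension of the $\Q$-span of $\ell_1,\dots,\ell_s$. The goal is to eliminate exactly the $r$ generators ``controlled'' linearly by the relators, leaving $n = m-r$ generators and $t = s-r$ relators with no linear part.

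First I would normalize the relators by linear algebra. Replacing $(r_1,\dots,r_s)$ by its image under an invertible $\Q$-matrix does not change the closed ideal $\langle\langle r_1, \dots, r_s \rangle\rangle$, so Gaussian elimination on the $s\times m$ matrix of coefficients of the $\ell_i$ arranges that $\ell_1,\dots,\ell_r$ are linearly independent while $\ell_{r+1}=\dots=\ell_s=0$. A linear change of generators, i.e. a filtered automorphism $\phi$ of $\wL(x)$ determined on the span of $x_1,\dots,x_m$, sends $\ell_i$ to $x_i$ for $i\le r$; replacing each $r_i$ by $\phi(r_i)$ (an isomorphism of the quotient) I may then assume $r_i = x_i + \rho_i$ with $\rho_i\in F_2$ for $i\le r$, and $r_k\in F_2$ for $k>r$, since $\phi$ is filtered and kills nothing off the linear part.

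The heart of the argument is an elimination step effected by a single filtered automorphism. I would define $\Psi\colon \wL(x)\to \wL(x)$ by $\Psi(x_i)=r_i$ for $i\le r$ and $\Psi(x_j)=x_j$ for $j>r$. Since every generator is sent into $F_1$, $\Psi$ is a filtered Lie algebra endomorphism; on the associated graded it induces the identity on $\gr^1$ (because $r_i\equiv x_i \bmod F_2$), hence, $\gr^*(\wL)$ being free on $\gr^1$, the identity on all of $\gr^*$. As the Malcev filtration is complete, a filtered endomorphism inducing an isomorphism on $\gr^*$ is a filtered automorphism, so $\Psi\in \Aut \wL(x)$. Applying $\Psi^{-1}$ to the presentation yields $E\cong \wL(x)/\langle\langle \Psi^{-1}(r_1), \dots, \Psi^{-1}(r_s) \rangle\rangle$, in which $\Psi^{-1}(r_i)=x_i$ for $i\le r$, while each $\Psi^{-1}(r_k)$, $k>r$, still lies in $F_2$ (as $\Psi^{-1}$ fixes $\gr^1$ and $r_k$ has zero linear part). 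Quotienting out the generators $x_1,\dots,x_r$ gives $\wL(x)/\langle\langle x_1, \dots, x_r \rangle\rangle\cong \wL(x_{r+1},\dots,x_m)$, and the residual relators descend to elements $r'_1,\dots,r'_t$ obtained by setting $x_1=\dots=x_r=0$; these remain in $F_2$. Relabeling $x_{r+1},\dots,x_m$ as $x'_1,\dots,x'_n$ with $n=m-r$, $t=s-r$, one gets $E\cong \wL(x')/\langle\langle r'_1, \dots, r'_t \rangle\rangle$ with all $r'_j\in F_2$ and $n-t=(m-r)-(s-r)=m-s$, as asserted.

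The main obstacle I expect is justifying that $\Psi$ is a genuine filtered automorphism: this is precisely where completeness of the Malcev filtration is indispensable, through the standard fact that a filtered map inducing an isomorphism on the associated graded is invertible by successive approximation. Secondary care is needed to verify that the closed ideal is preserved under $\Psi^{-1}$ and that killing $x_1,\dots,x_r$ really produces the \emph{free} Malcev Lie algebra on the complementary generators, rather than merely a quotient of it.
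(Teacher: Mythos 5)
Your proof is correct and follows essentially the same route as the paper: the elimination automorphism $\Psi$ is exactly the paper's map $f$, justified in the same way ($\gr^1$-identity, generation of $\gr^*(\wL)$ in degree one, and completeness of the Malcev filtration). The only cosmetic difference is that the paper packages your Gaussian-elimination and change-of-generators normalizations into a single choice of vector space splittings $Y = Z \oplus B$, $X = N \oplus B$ with $\overline{r} = 0 \oplus \id$.
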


\begin{proof}
Denote by $X$ the $\Q$--vector space with basis $\{ x_1, \dots, x_m\}$. Let $Y$ be
the $\Q$--vector space with basis $\{ y_1, \dots, y_s\}$. Define a $\Q$--linear map,
$r: Y\to \wL (X)$, by $r(y_j)=r_j$, and set $\overline{r} =\pi \circ r$, where 
$\pi\colon \wL (X)\surj \gr^1 (\wL (X))=X$ is the canonical projection. Choose
vector space decompositions, $Y=Z\oplus B$ and $X=N\oplus B$, 
such that $\overline{r}= 0\oplus \id$. Define a filtered Lie algebra map, 
$f: \wL (X)\to \wL (X)$, on the free generators, by: $f(z)=z$, for $z\in N$, 
and $f(z)=r(z)$, for $z\in B$.

We claim that $f$ is a filtered Lie isomorphism. Indeed, $\gr^1(f)=\id$, by construction.
Hence, $\gr^*(f)$ is onto, since $\gr^*(\wL(X))$ is generated in degree one. 
By a dimension argument, $\gr^*(f)$ is an isomorphism. Our claim follows, by completeness
of the Malcev filtration. 

Interpreting $f$ as a change of free generators, we obtain the desired 
minimal presentation, where $n=\dim N$ and $t=\dim Z$.
\end{proof}

\section{Proof of Theorem \ref{thm:nilposdef}}
\label{sec:pf}

\subsection{}
\label{ss41}

To finish the proof, we are left with verifying the following.

\begin{lemma} 
\label{lem:end}
Let $E = \wL (x_1,  x_2)/ \langle \langle r \rangle \rangle$ be a
one-relator Malcev Lie algebra, with $r\in F_2 \wL (x)$. If 
$\dim_{\Q} \gr^* (E)<\infty$, then $r \not\equiv 0$ mod $F_3 \wL (x)$.
\end{lemma}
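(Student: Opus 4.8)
The plan is to argue by contraposition, so assume $r\equiv 0\bmod F_3\wL(x)$ and deduce that $\gr^*(E)$ is infinite-dimensional. If $r=0$ then $E=\wL(x_1,x_2)$ is free and $\gr^*(E)=\LL^*(x_1,x_2)$ is already infinite-dimensional, so we may suppose $r\neq 0$ and let $\rho:=r_d$ be its initial form, i.e. the lowest nonzero homogeneous component, living in filtration degree $d$ with $d\ge 3$ (since $r\in F_2$ and $r\equiv 0\bmod F_3$). By the free-generation of $\gr^*(\wL)=\LL^*(x_1,x_2)$ in degree one, passing to associated graded gives a presentation $\gr^*(E)=\LL^*(x_1,x_2)/J$, where $J$ is the ideal of initial forms of the closed ideal $\langle\langle r\rangle\rangle$. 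Since $r$ begins in degree $d$, the ideal $J$ contains $\rho$ and vanishes in all degrees $<d$; in particular $\gr^k(E)=\LL^k(x_1,x_2)$ is free for $k<d$.

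First I would transport the problem to the universal enveloping algebra. Set $A=U(\gr^*(E))$; by the Poincaré--Birkhoff--Witt theorem (characteristic zero) one has the Hilbert-series identity
\[
H_A(t)\;=\;\prod_{k\ge 1}\bigl(1-t^k\bigr)^{-\dim_\Q \gr^k(E)},\qquad\text{so}\qquad H_A(t)^{-1}=\prod_{k\ge 1}\bigl(1-t^k\bigr)^{\dim_\Q \gr^k(E)}.
\]
The key computational input is that $\gr^*(E)$ is a \emph{one-relator} graded Lie algebra, with a single defining relation concentrated in degree $d$: I would invoke the theory of one-relator (mild) Lie algebras, which shows that such an algebra has homological dimension two and the expected Hilbert series, namely
\[
H_A(t)^{-1}\;=\;1-2t+t^{d}.
\]
Equivalently, the graded Euler characteristic of the minimal free resolution of $\Q$ over $A$ collapses to $1-2t+t^d$ because $\operatorname{Tor}^A_0=\Q$, $\operatorname{Tor}^A_1=\gr^1(E)$ has dimension $2$ in degree $1$, $\operatorname{Tor}^A_2$ is one-dimensional in degree $d$, and $\operatorname{Tor}^A_{\ge 3}=0$.

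To finish I would run a growth/root argument on this Hilbert series. The polynomial $p(t)=1-2t+t^{d}$ satisfies $p(\tfrac12)=2^{-d}>0$, $p(1)=0$ and $p'(1)=d-2>0$ for $d\ge 3$; hence $p$ is negative just to the left of $1$ and has a real root $t_0\in(\tfrac12,1)$, which is not a root of unity. If $\gr^*(E)$ were finite-dimensional, then $H_A(t)^{-1}=\prod_{k}(1-t^k)^{\dim\gr^k(E)}$ would be a \emph{finite} product whose complex roots all lie on the unit circle, contradicting the existence of the root $t_0\in(0,1)$. (Equivalently: finite-dimensionality of $\gr^*(E)$ forces polynomial growth of $\dim A_k$ by PBW, whereas the pole of $1/(1-2t+t^d)$ at $t_0<1$ forces exponential growth.) Thus $\gr^*(E)$ is infinite-dimensional, contrary to hypothesis, so in fact $d=2$, i.e. $r\not\equiv 0\bmod F_3\wL(x)$. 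Note that for $d=2$ the identity reads $H_A(t)^{-1}=(1-t)^2$, matching the abelian (finite-dimensional) case and confirming that degree two is exactly the borderline.

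The main obstacle is the middle step: establishing that $A$ is genuinely one-relator, i.e. that passing to initial forms introduces no extra associative relations and no syzygies lowering the Hilbert series below $1/(1-2t+t^d)$. This inertness of the single relator $\rho$ in the free associative algebra $\Q\langle x_1,x_2\rangle$ is precisely what the one-relator Lie algebra theory (Labute) supplies; alternatively one may settle for the Golod--Shafarevich lower bound $H_A(t)\succeq(1-2t+t^d)^{-1}$ (coefficientwise), which already forces exponential growth for $d\ge 3$ and hence suffices for the contradiction.
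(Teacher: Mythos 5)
Your proposal is correct and takes essentially the same route as the paper: pass to the initial form $r_d$ of the relator, invoke inertness of a nonzero homogeneous Lie element (the paper cites Markl--Papadima and Halperin--Lemaire where you cite Labute's one-relator theory) to identify the Hilbert series of $U(\gr^*(E))$ with $(1-2t+t^d)^{-1}$, and then contradict the finite PBW product $\prod_{k}(1-t^k)^{a_k}$. The only cosmetic difference is the endgame: you locate a root of $1-2t+t^d$ in $(\tfrac12,1)$, hence off the unit circle, while the paper observes that $(1-t)^2$ must divide $1-2t+t^d$ because $a_1=2$, forcing $d=2$; both conclusions are immediate from the same comparison.
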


Indeed, let us start with a finitely generated nilpotent, positive deficiency
group $G$, and assume $b_1(G)=2$, as in Remark \ref{rem:step2}. By \eqref{eq:malfp},
$$E_G \cong \wL (x'_1, \dots, x'_m)/ \langle \langle r'_1, \dots, r'_{m-1} \rangle \rangle \, .$$
Moreover, $\gr^1(E_G)=\Q^2$, see \eqref{eq:gr}. Resorting to Proposition \ref{prop:malmin},
we find a minimal Malcev Lie presentation,
$E_G \cong  \wL (x_1,  x_2)/ \langle \langle r \rangle \rangle$, with $r\in F_2 \wL (x)$.
Since $G$ is nilpotent, we also know that $\dim_{\Q} \gr^* (E_G)<\infty$,
again by \eqref{eq:gr}. 

Granting Lemma \ref{lem:end}, we infer that $\gr^2(G)\otimes \Q= \gr^2(E_G)=0$, since
$\LL^2(x_1, x_2)$ is one-dimensional, generated by $[x_1, x_2]$. This in turn implies that
$\mu_G$ is an isomorphism, see Remark \ref{rem:dual}. In particular, $\mu_G \ne 0$,
as claimed in Remark \ref{rem:step2}.

\subsection{}
\label{ss42}

We embark now on the proof of Lemma \ref{lem:end}. Clearly, $r\ne 0$, if
$\dim \gr^*(E)< \infty$, so $r= r_d +$ higher terms, where 
$0\ne r_d \in \LL^d(x_1, x_2)$ and $d\ge 2$. We have to show that $d=2$. 

By \cite[\S 3]{MP}, we know that $\gr^*(E)= \LL^*(x_1, x_2)/\,  {\rm ideal}\, (r_d)$,
since $r_d$ is an inert Lie element, in the sense of \cite{HL}. We have thus finally 
reduced the proof of Theorem \ref{thm:nilposdef} to the following assertion.

\begin{lemma}
\label{lem:inert}
Let $L^* =\LL^*(x_1, x_2)/\,  {\rm ideal}\, (r_d)$ be a finite dimensional
graded Lie algebra over $\Q$, where $0\ne r_d \in \LL^d(x_1, x_2)$ and $d\ge 2$.
Then necessarily $d=2$.
\end{lemma}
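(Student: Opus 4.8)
The plan is to read off the growth of the universal enveloping algebra $U := U(L^*)$ and compare two estimates for it. Writing $V = \Q x_1 \oplus \Q x_2$, we have $U(\LL^*(x_1,x_2)) = T(V)$, the tensor algebra, and passing to the quotient converts the Lie ideal generated by $r_d$ into the two-sided associative ideal it generates; hence $U \cong T(V)/(r_d)$ is a graded one-relator associative algebra with a single defining relation in degree $d$ (there are no relations in lower degree, since the ideal generated by a degree-$d$ element lives in degrees $\ge d$, so $\dim U_n = 2^n$ for $n<d$). The strategy is: if $L^*$ were finite dimensional, then by Poincar\'e--Birkhoff--Witt the Hilbert series of $U$ would be the finite product $\prod_{j}(1-t^j)^{-\dim L^j}$, a rational function with poles only at roots of unity, so $\dim U_n$ would grow at most polynomially in $n$. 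I will contradict this for $d\ge 3$ by exhibiting exponential growth of $\dim U_n$ via a Golod--Shafarevich lower bound.

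For the lower bound, recall the Golod--Shafarevich inequality: for $T(V)/I$ on $g=\dim V$ generators with $r_i$ minimal relations in degree $i$ and $R(t)=\sum_i r_i t^i$, one has coefficientwise $\sum_n (\dim U_n)\,t^n \succeq (1-gt+R(t))^{-1}$, valid in each degree $n$ provided the coefficients of $(1-gt+R(t))^{-1}$ are nonnegative through degree $n$. Here $g=2$ and $R(t)=t^d$, so the comparison series is $(1-2t+t^d)^{-1}=\sum_n c_n t^n$. First I would check $c_n\ge 0$ for all $n$: from the recursion $c_n = 2c_{n-1}-c_{n-d}$ (with $c_0=1$ and $c_j=0$ for $j<0$) an easy induction shows $(c_n)$ is positive and nondecreasing, since once $c_{n-d}\le c_{n-1}$ we get $c_n\ge 2c_{n-1}-c_{n-1}=c_{n-1}$. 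Hence the Golod--Shafarevich bound $\dim U_n \ge c_n$ holds in every degree.

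It remains to see that $c_n$ grows exponentially when $d\ge 3$. The $c_n$ are the Taylor coefficients of $1/f$ with $f(t)=1-2t+t^d$. Since $f(0)=1>0$ while $f(0.7)=0.7^{\,d}-0.4\le 0.343-0.4<0$ for every $d\ge 3$, the polynomial $f$ has a real root in $(0,0.7)$. Because the $c_n$ are nonnegative, Pringsheim's theorem identifies the radius of convergence of $\sum c_n t^n$ with its smallest positive real singularity, which is at most this root and so is $<1$; therefore $\limsup_n c_n^{1/n}>1$ and $\dim U_n\ge c_n$ grows exponentially. This contradicts the polynomial growth forced by finite dimensionality of $L^*$. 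Thus $d\ge 3$ is impossible, and since $d\ge 2$ by hypothesis we conclude $d=2$ (consistent with $r_d=c[x_1,x_2]$, where $L^*$ is the abelian Lie algebra $\Q^2$).

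The main obstacle is making the Golod--Shafarevich estimate apply cleanly: one must know that $U=T(V)/(r_d)$ is genuinely presented by a single relation in degree $d$ with no lower-degree relations forced, and that the comparison series $(1-2t+t^d)^{-1}$ has nonnegative coefficients so that the inequality is legitimate in all degrees; both points are handled above. The only remaining arithmetic input is locating a root of $1-2t+t^d$ strictly inside the unit disk uniformly in $d\ge 3$, which the single evaluation at $t=0.7$ settles.
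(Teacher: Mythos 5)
Your proof is correct, and it takes a genuinely different route from the paper's. The paper computes the Hilbert series of $U L^*$ \emph{exactly}: since a nonzero homogeneous Lie element $r_d\in\LL^d(x_1,x_2)$ is inert, one has the equality $U(z)^{-1}=1-2z+z^d$ on the nose (this is the one substantive citation, \cite[Th\'eor\`eme 2.4]{HL}); combining this with the Poincar\'e--Birkhoff--Witt factorization $U(z)^{-1}=\prod_{i\ge 1}(1-z^i)^{a_i}$, which is a genuine polynomial because $\dim_{\Q} L^*<\infty$, and with $a_1=2$, the paper concludes that $(1-z)^2$ divides $1-2z+z^d$, which forces $d=2$ (the derivative of $1-2z+z^d$ at $z=1$ equals $d-2$). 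You keep only a one-sided estimate instead: the Golod--Shafarevich inequality, which requires no inertness input at all, gives $\dim U_n\ge c_n$ where $\sum_n c_n t^n=(1-2t+t^d)^{-1}$, legitimately applied in every degree once nonnegativity of the $c_n$ is checked --- your induction via the recursion $c_n=2c_{n-1}-c_{n-d}$ is sound --- and you then replace the paper's divisibility argument by a growth dichotomy: for $d\ge 3$ the polynomial $1-2t+t^d$ has a root in $(0,0.7)$, so the $c_n$ grow exponentially, while PBW plus finite-dimensionality of $L^*$ bounds $\dim U_n$ polynomially (finitely many factors $(1-t^j)^{-a_j}$, poles only at roots of unity). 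Both arguments share the PBW step and the comparison polynomial $1-2t+t^d$. What each buys: yours is self-contained modulo the classical Golod--Shafarevich lemma, eliminating the Halperin--Lemaire inertia theorem from the proof of this lemma (though note the paper still needs inertness separately, via \cite[\S 3]{MP}, to identify $\gr^*(E)$ with $\LL^*(x_1,x_2)/\,{\rm ideal}\,(r_d)$ before the lemma is even invoked, so inertia cannot be purged from the proof of Theorem \ref{thm:nilposdef} as a whole); the price is a small amount of analysis (radius of convergence, Pringsheim). The paper's proof, granting \cite{HL}, is shorter, purely algebraic, and yields sharper information, namely the exact Hilbert series of $U L^*$ rather than a lower bound.
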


\begin{proof}
Let  $U L^* =\TT^*(x_1, x_2)/\,  {\rm ideal}\, (r_d)$ be the universal
enveloping algebra (where $\TT^*$ denotes the tensor algebra, graded  by
tensor length), with Hilbert series $U(z)$. By inertia, we have
\begin{equation}
\label{eq:ui}
U(z)^{-1}= 1-2z+z^d\, ,
\end{equation}
see \cite[Th\' eor\` eme 2.4]{HL}.

Set $a_i= \dim_{\Q} L^i$. By the Poincar\' e-Birkhoff-Witt theorem,
\begin{equation}
\label{eq:pbw}
U(z)^{-1}= \prod_{i\ge 1} (1-z^i)^{a_i}\, ,
\end{equation}
where the infinite product is a polynomial, since $\dim_{\Q} L^* < \infty$,
by assumption. 

By comparing \eqref{eq:ui} and \eqref{eq:pbw}, we deduce that the polynomial
$1-2z+z^d$ is divisible by $(1-z)^2$, since clearly $a_1= 2$. This can only
happen for $d=2$.
\end{proof}

The proof of Theorem \ref{thm:nilposdef} is complete.

\begin{ack}
We are grateful to Daniel Matei for raising an inspiring question, on the
possibility of characterizing nilpotence of groups by properties of
characteristic varieties. 
\end{ack}

\bibliographystyle{amsplain}

\begin{thebibliography}{00}

\bibitem{A} L.A.~Alaniya,
{\em Cohomology with local coefficients of certain nilmanifolds},
Russian Math. Surveys (5)\textbf{54} (1999), 1019--1020.

\bibitem{B}  K.S.~Brown,
{\em Cohomology of groups}, Grad. Texts in Math., 
vol.~87, Springer-Verlag, New York-Berlin, 1982.

\bibitem{D} J.F.~Davis,
{\em A two-component link with Alexander polynomial one is concordant to
the Hopf link}, preprint {\tt arxiv:math/0408226}.

\bibitem{DPS1} A.~Dimca, S.~Papadima, A.~I.~Suciu, 
{\em Formality, {A}lexander invariants, and a question 
of Serre}, preprint {\tt arxiv:math/0512480}.


\bibitem{DPS2} A.~Dimca, S.~Papadima, A.~Suciu, 
{\em Quasi-{K}\"{a}hler {B}estvina--{B}rady groups}, 
to appear in J. Alg. Geometry; 
available at {\tt arxiv:math/0603446}.

\bibitem{DPS3} A.~Dimca, S.~Papadima, A.~Suciu, 
{\em Non-finiteness properties of fundamental groups of smooth projective
varieties}, preprint {\tt arxiv:math/0609456}.


\bibitem{DPS4} A.~Dimca, S.~Papadima, A.~I.~Suciu, 
{\em Alexander polynomials: {E}ssential variables and 
multiplicities}, to appear in Internat. Math. Res. Notices;
available at  {\tt arxiv:0706.2499}.

\bibitem{E}  D.~Eisenbud,
{\em Commutative algebra with a view towards 
algebraic geometry}, Grad. Texts in Math., vol.~150, 
Springer-Verlag, New~York, 1995. 

\bibitem{EN} D.~Eisenbud, W.~Neumann, 
{\em Three-dimensional link theory and invariants of plane curve
singularities}, Annals of Math. Studies, vol.~110, Princeton 
University Press, Princeton, NJ, 1985.


\bibitem{EM} B.~Evans, L.~Moser,
{\em Solvable fundamental groups of compact $3$-manifolds},
Trans. Amer. Math. Soc. \textbf{168} (1972), 189--210.


\bibitem{F} R.~H.~Fox,
{\em Free differential calculus. \textup{I}. Derivation in the 
free group ring}, Ann. of Math. \textbf{57} (1953), 547--560.

\bibitem{HL} S.~Halperin, J.-M.~Lemaire,
{\em Suites inertes dans les alg\` ebres de Lie gradu\' ees},
Math. Scand. \textbf{61} (1987), 39--67.

\bibitem{HMR} P.~Hilton, G.~Mislin, J.~Roitberg,
{\em Localization of nilpotent groups and spaces},
North-Holland Math. Studies, vol.~15, North-Holland,
Amsterdam, 1975.

\bibitem{HS} J.~Howie, H.~Short, 
{\em The band-sum problem}, J. London Math. 
Soc. (2) \textbf{31} (1985), no.~3, 571--576. 

\bibitem{MP} M.~Markl, S.~Papadima,
{\em Moduli spaces for fundamental groups and link invariants derived from the
lower central series},
Manuscripta Math. \textbf{81} (1993), 225--242.

\bibitem{MM} C.~T.~McMullen,
{\em The Alexander polynomial of a $3$-manifold and the 
Thurston norm on cohomology}, Ann. Sci. \'{E}cole Norm. Sup. 
\textbf{35} (2002), no. 2, 153--171. 


\bibitem{N} S.~P.~Novikov,
{\em {B}loch homology, critical points of functions, 
and closed $1$-forms},  Dokl. Akad. Nauk. SSSR 
\textbf{287} (1986), 1321--1324. 


\bibitem{P} S.~Papadima, 
{\em Finite determinacy phenomena for finitely presented groups},
in: Proceedings of the 2nd Gauss Symposium. Conference A: 
Mathematics and Theoretical Physics (Munich, 1993), 507--528, 
Sympos. Gaussiana, de Gruyter, Berlin, 1995.  

\bibitem{Q}  D.~Quillen,
{\em Rational homotopy theory}, Ann. of Math. 
\textbf{90} (1969), 205--295.

\bibitem{S}  D.~Sullivan,
{\em Infinitesimal computations in topology},
Inst. Hautes \'{E}tudes Sci. Publ. Math.
\textbf{47} (1977), 269--331.

\bibitem{T} G.~Torres,
{\em On the Alexander polynomial},
Ann. of Math. \textbf{57} (1953), 57--89.


\end{thebibliography}

\end{document}